\newtheorem{thm}{Theorem}[section]
\newtheorem{lemma}[thm]{Lemma}
\newtheorem{cl}{Claim}
\newtheorem{prop}[thm]{Proposition}
\newtheorem{ques}[thm]{Question}
\newtheorem{cor}[thm]{Corollary}
\newtheorem{rem}[thm]{Remark}
\newcommand{\norm}[1]{\left\Vert #1\right\Vert}
\def \N {\mathbb N}
\def \Z {\mathbb Z}
\def \R {\mathbb R}
\def \E {\mathbb E}
\def\B {\mathcal B}
\def \X {\mathcal{X}}
\def \Y {\mathcal{Y}}
\def \ep {\epsilon}
\numberwithin{equation}{section}
\begin{document}

\title[]{Monochromatic polynomial sumset structures on $\N$}
	
	\author[]{Zhengxing Lian and Rongzhong Xiao}
	
	\address[Zhengxing Lian]{School of Mathematical Sciences, Xiamen University, Xiamen, Fujian, 361005, PR China}
	\email{lianzx@xmu.edu.cn}
	
	\address[Rongzhong Xiao]{School of Mathematical Sciences, University of Science and Technology of China, Hefei, Anhui, 230026, PR China}
	\email{xiaorz@mail.ustc.edu.cn}

	\subjclass[2020]{Primary: 05D10; Secondary: 11P70, 37A44.}
	\keywords{Polynomial sumsets, Finite colorings, Polynomial progressions.}
	
\begin{abstract}
In the paper, we search for monochromatic infinite additive structures involving polynomials over $\N$. It is proved that for any $r\in \N$, any two distinct natural numbers $a,b$, and any $2$-coloring of $\N$, there exist two sets $B,C\subset \N$ with $|B|=r$ and $|C|=\infty$ such that there exists some color containing $B+aC$ and $B+bC$.
\end{abstract}

\maketitle

\section{Introduction}
\subsection{Infinite sumsets structures}
In 1974, Hindman \cite{H74} proved a conjecture of Graham and Rothschild \cite{GrahamRothschild71} by showing the following statement: For any finite coloring of $\N$, there exists a sequence $\{x_n\}_{n\ge 1}$ in $\N$ such that $$\{\sum_{n\in F}x_{n}:F\ \text{is a non-empty finite subset of}\ \N\}$$ is monochromatic. 

Hindman's theorem implies that for any finite coloring of $\N$, there exists a sequence $\{B_n\}_{n\ge 1}$ of infinite subsets of $\N$ such that 
$$\bigcup_{k\ge 1}(B_1+\cdots+B_k)$$ is monochromatic. Naturally, ones seek to search for its density analogues. Very recently, Hern\'andez, Kousek, and Radi\'c \cite{HKR2025} confirmed this by showing that for any $A\subset \N$ with positive upper Banach density, there exists a sequence $\{B_n\}_{n\ge 1}$ of infinite subsets of $\N$ such that 
$$\bigcup_{k\ge 1}(B_1+\cdots+B_k)\subset A.$$

For more on infinite sumsets structures, see the survey \cite{KMRR2025} and these references \cite{MRR,H19,KMRR22a,KMRR22b,KousekRadic2025,Kousek2025a,KMRR2025b,Her2025}.
\subsection{Polynomial progressions in ``large" sets}
In 1927, van der Waerden \cite{VDW} proved that for any finite coloring of $\N$, there exists some color containing arithmetic progressions of any given length. In 1975, Szemer\'edi \cite{S75} proved a conjecture of Erd\"os and Tur\'an by showing that every $A\subset \N$ with positive upper density contains arithmetic progressions of any given length. In 1977, Furstenberg \cite{F77} provided an ergodic theoretic proof for Szemer\'edi theorem. Following Furstenberg’s approach, in 1996, Bergelson and Leibman built polynomial extensions of van der Waerden's theorem and Szemer\'edi's by showing the following multiple recurrence:
\begin{thm}\label{thm1-1}
	$($\cite[Theorem A]{BL96}$)$ Given $k\in \N$, let $P_1,\ldots,P_k\in \Z[n]$ with $P_{1}(0)=\cdots=P_{k}(0)=0$. Then for any measure preserving system $(X,\X,\mu,T)$ and any $A\in \X$ with $\mu(A)>0$, 
	$$\liminf_{N\to\infty}\frac{1}{N}\sum_{n=1}^{N}\mu(A\cap T^{-P_{1}(n)}A\cap \cdots \cap T^{-P_{k}(n)}A)>0.$$ 
\end{thm}
By using Theorem \ref{thm1-1} iteratively and Furstenberg's correspondence principle \cite{F77}, we have the following:
\begin{prop}\label{prop1-1}
	Given $d\in \N$, let $\mathcal{A}=\{P_1,\ldots,P_d\}\subset \Z[n]$ and for each $1\le i\le d$, $P_i$ has positive leading coefficient and zero constant term. Then for any $r\in \N$ and any $A\subset \N$ with positive upper Banach density, there exist two sets $B,C\subset \N$ with $|B|=\infty$ and $|C|=r$ such that the set $$\bigcup_{P\in \mathcal{A}} (B+P(C))\subset A.$$
\end{prop} 
Although the establishment process of the above proposition is simple, it combines aspects of  polynomial version of Szemer\'edi's theorem and infinite sumsets structures in some sense.
\subsection{Main results}
In this paper, we seek to search for certain additive structures in the spirit of Proposition \ref{prop1-1}. Specifically, we try to find additive structures of the form
\begin{equation}\label{eq0}
	\bigcup_{P\in \mathcal{A}} (B+P(C))
\end{equation}
in some large sets, where $|C|=\infty$.

The following example demonstrates that the configuration \eqref{eq0} may not exist in some set with positive upper Banach density: Let $A=\bigcup_{n\ge 1}\{n^2+n,\ldots,n^2+2n\}$. Then for any $B,C\subset \N$ with $|C|=\infty$, $B+C^2$ is not a subset of $A$.

Due to the above example, we have to search for the configuration \eqref{eq0} in the finite coloring setting.
\begin{ques}\label{Q1}
	Let $d\in \N,k>1$ and $r\in \N\cup \{\infty\}$. Let $\mathcal{A}=\{P_1,\ldots,P_d\}\subset \Z[n]$ and for each $1\le i\le d$, $P_i$ has positive leading coefficient and zero constant term. Is it true that for any $k$-coloring of  $\N$, there exist two sets $B,C\subset \N$ with $|B|=r$ and $|C|=\infty$ such that the set $$\bigcup_{P\in \mathcal{A}} (B+P(C))$$ is monochromatic ?
\end{ques}
\begin{rem}
	\begin{itemize}
		\item[(1)] By Ramsey theorem (see \cite[Theorem 5 of Chapter 1]{GRS}), we have that when $d=1$, the answer to Question \ref{Q1} is positive.
		\item[(2)] When $r\in \N$ and $C$ is a finite set, Question \ref{Q1} is a special case of Proposition \ref{prop1-1}.
	\end{itemize}
\end{rem}
Unfortunately, for the most part, the answer to Question \ref{Q1} is negative. This can be shown be the following counter-examples.
\begin{prop}\label{P1}
	Let $P,Q\in \Z[n]$ with positive leading coefficient and zero constant term and $P\neq Q$. Then:
	\begin{itemize}
		\item[(1)] If $\max\{\deg P,\deg Q\}>1$, there exists a $2$-coloring $\phi:\N\rightarrow \{1,2\}$ such that for any $i\in \{1,2\}$ and any $n\in \N$, the set $$\{m\in \N:\phi(n+P(m))=\phi(n+Q(m))=i\}$$ is empty or finite.
		\item[(2)] If $\deg P=\deg Q=1$, there exists a $3$-coloring $\phi:\N\rightarrow \{1,2,3\}$ such that for any $i\in \{1,2,3\}$ and any $n\in \N$, the set $$\{m\in \N:\phi(n+P(m))=\phi(n+Q(m))=i\}$$ is empty or finite.
		\item[(3)] If $\deg P=\deg Q=1$, there exists a $2$-coloring $\phi:\N\rightarrow \{1,2\}$ such that for any $i\in \{1,2\}$, there are not two infinite subsets $B,C\subset \N$ such that $$\phi(B+P(C))=\phi(B+Q(C))=i.$$
	\end{itemize}
\end{prop}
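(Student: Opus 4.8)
The plan is to prove each part by exhibiting an explicit coloring and verifying the required sparseness by elementary asymptotics. For (1) and (2) it suffices to produce $\phi$ with the stronger property that, for every fixed $n$, $\phi(n+P(m))\neq\phi(n+Q(m))$ for all large $m$; this makes $\{m:\phi(n+P(m))=\phi(n+Q(m))\}$ finite, a fortiori so is its intersection with $\phi^{-1}(i)$. After interchanging $P,Q$ if necessary we may assume $P(m)>Q(m)$ for all large $m$, and we color small integers arbitrarily. For part (2), write $P(m)=am$, $Q(m)=bm$ with $0<b<a$, so $\tfrac{n+am}{n+bm}\to\tfrac ab$, and set $\rho:=(a/b)^{2/3}>1$, whence $\log_\rho(a/b)=\tfrac32$. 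Color $x$ by $\lfloor\log_\rho x\rfloor\bmod 3$. For fixed $n$ and all large $m$, if $n+bm$ lies in the block $[\rho^k,\rho^{k+1})$ then $\log_\rho(n+am)=\log_\rho(n+bm)+\delta_m$ with $\delta_m\in(1,2)$, so $n+am$ lies in block $k+1$ or $k+2$; since $0,1,2$ are pairwise distinct modulo $3$, $\phi(n+am)\neq\phi(n+bm)$, and (2) follows.

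For part (1) one cannot play the same game with two colors — the shift $n$ forces the ratio $\tfrac{n+P(m)}{n+Q(m)}$ to approach $\tfrac{a_P}{a_Q}$ two-sidedly, ruining any parity argument — so instead one exploits that the value set of the higher-degree polynomial has consecutive gaps tending to infinity (equivalently, $P-Q$ is a nonzero polynomial with zero constant term, so $P(m)-Q(m)\to\infty$), leaving room to interleave color changes. Choose a rapidly increasing marker sequence $T_1<T_2<\cdots$ adapted to $P,Q$ — informally $T_{k+1}=\lceil P(Q^{-1}(T_k))\rceil$, which grows super-polynomially if $\deg P>\deg Q$ and geometrically if $\deg P=\deg Q$ — and color $x$ by the parity of $\#\{k:T_k\le x\}$, so that $\phi(n+P(m))\neq\phi(n+Q(m))$ exactly when the window $(n+Q(m),n+P(m)]$ contains an odd number of markers. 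For $n=0$ this window contains precisely one marker for all large $m$; for general $n$ the ``handoff'' from $T_k$ to $T_{k+1}$ opens a window of $m$ of length $\asymp n\,\big(\tfrac1{Q'(Q^{-1}(T_k))}-\tfrac1{P'(Q^{-1}(T_k))}\big)$ on which the count drops to $0$. This length tends to $0$ — leaving only finitely many exceptional $m$ — whenever $\deg Q\ge 2$ (which covers both $\deg P>\deg Q\ge 2$ and $\deg P=\deg Q\ge 2$); the remaining case $\deg P\ge 2>\deg Q=1$ is genuinely harder and requires a more elaborate construction (refining the coloring inside each $[T_k,T_{k+1})$, or inserting secondary markers at several scales). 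In every subcase the hypothesis $\max\{\deg P,\deg Q\}>1$ is precisely what is used.

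For part (3) only two colors are allowed, so one settles for the weaker non-existence of infinite $B,C$. With $P(m)=am$, $Q(m)=bm$, $b<a$, the key identity is $a(n+bm)-b(n+am)=(a-b)n$: for a fixed $b_0\in B$ the pairs $(b_0+bc,\,b_0+ac)_{c\in C}$ lie on the fixed line $v=\tfrac ab u-\tfrac{(a-b)b_0}{b}$. Thus a monochromatic $B+bC$ whose dilate-and-shift $B+aC$ is also monochromatic of the same color $i$ yields an infinite set $\{u_j\}\subset\phi^{-1}(i)$ whose ``$\tfrac ab$-dilate minus a constant'' again lies in $\phi^{-1}(i)$ — and, since $B$ is infinite too, this must persist coherently across scales. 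The plan is to build a two-coloring by rapidly growing blocks with block-ratios tuned to $a/b$ that admits no such self-similar infinite monochromatic configuration, and to reach a contradiction by following a hypothetical $(B,C)$ through the scales.

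The main obstacle is part (3), together with the case $\deg P\ge 2>\deg Q=1$ of part (1): ruling out all infinite $B,C$ with only two colors is a genuinely Ramsey-type requirement, and the naive block colorings that dispatch (1)–(2) do admit such configurations (indeed, even the doubly-exponential block coloring fails for $\{B+C,\,B+2C\}$), so designing the coloring and proving that the algebraic line-relation is incompatible with it is where the real work lies; by contrast, once the marker sequences are chosen correctly the verifications in (1)–(2) are routine asymptotic bookkeeping.
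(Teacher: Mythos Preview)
Your treatment of (2) is correct and essentially the paper's: both use a 3-coloring by geometric blocks tuned to $a/b$ (the paper cuts each $[l^m,l^{m+1})$ into three sub-intervals at carefully chosen points $y,x$; you take $\lfloor\log_\rho x\rfloor\bmod 3$ with $\rho=(a/b)^{2/3}$), exploiting that $\log\frac{n+am}{n+bm}$ converges to a value strictly between one and two block-widths.

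For (1) there is a gap you did not flag, already in the subcase $\min\{\deg P,\deg Q\}\ge2$ that you treat as routine. You assert that since each handoff window has length $\ell_k\asymp n\bigl(\tfrac1{Q'(s_k)}-\tfrac1{P'(s_k)}\bigr)\to0$ there are only finitely many exceptional $m$; but there is one such window at \emph{every} marker transition, hence infinitely many, and a window of positive length $<1$ can still contain an integer. Neither $\ell_k\to0$ nor $\sum_k\ell_k<\infty$ rules this out: whether the $k$-th window (which sits just below $s_k=Q^{-1}(T_k)$) catches an integer depends on the fractional part of $s_k$, which your recursion $T_{k+1}=\lceil P(Q^{-1}(T_k))\rceil$ does not control. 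The paper's construction for (1) is accordingly much more elaborate than a bare marker sequence: alongside the markers $a_k=\psi^{k}(a_0)$ (with $\psi=Q\circ P^{-1}$, $Q$ the larger polynomial) it builds recursive ``correction'' sets $A_k\subset[a_k,a_{k+1})$ of logarithmic size, defined so that $h+P(m)\in A_k$ forces $h+Q(m)\in A_{k+1}$ whenever $0\le h<\log a_k$, and then \emph{flips} the block-parity coloring on $\bigcup_k A_k$. Proving that the $A_k$ land in the correct blocks (the paper's Claim~1) is itself nontrivial, and this single construction simultaneously handles the subcase $\deg P=1<\deg Q$ that you explicitly left open. (When $\deg P=\deg Q$ with equal leading coefficients the paper uses a separate, simpler argument.)

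For (3) your instinct that naive block colorings must fail is wrong: the paper's coloring is precisely $\phi(x)=\lfloor\log_{b/a}x\rfloor\bmod2$. What you are missing is that the contradiction requires \emph{two} elements of $B$, not the single-line identity for a fixed $b_0$. One picks $n_1,n_2\in B$ with $an_2>bn_1$ and any sufficiently large $k\in C$; forcing all four of $n_i+ak,\,n_i+bk$ into blocks of equal parity forces them into a \emph{single} block, and the resulting pair of inequalities on $k$ is incompatible as soon as $an_2>bn_1$. The single-$b_0$ relation you wrote down does not suffice by itself.
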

\begin{prop}\label{P2}
	For any three distinct natural numbers $a,b,c$, there exists a $2$-coloring $\phi:\N\rightarrow \{1,2\}$ such that for any $i\in \{1,2\}$ and any $n\in \N$, the set $$\{m\in \N:\phi(n+am)=\phi(n+bm)=\phi(n+cm)=i\}$$ is empty or finite.
\end{prop}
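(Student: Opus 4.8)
The plan is to exhibit a single explicit $2$-colouring of ``logarithmic type''. Assume $a<b<c$ (the set $\{n+am,n+bm,n+cm\}$ is unordered, so the statement depends only on $\{a,b,c\}$). Since ``$\phi(n+am)=\phi(n+bm)=\phi(n+cm)=i$ for some $i$'' means precisely that $\phi(n+am),\phi(n+bm),\phi(n+cm)$ are all equal, it suffices to build $\phi:\N\to\{1,2\}$ so that for every $n$ there is $M_n$ with $\phi(n+am),\phi(n+bm),\phi(n+cm)$ not all equal whenever $m>M_n$; then each set in the statement lies in $\{1,\dots,M_n\}$.

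For a base $\rho>1$ to be chosen, set $\phi(x)=\lfloor\log_\rho x\rfloor\bmod 2$ (reading $\{0,1\}$ as $\{1,2\}$; this is well defined on all of $\N$ since $\log_\rho x\ge 0$). I would choose $\rho$ so that, with $p_1:=\lfloor\log_\rho(b/a)\rfloor$ and $p_2:=\lfloor\log_\rho(c/b)\rfloor$,
$$
p_1 \text{ and } p_2 \text{ are both odd},\qquad \lfloor\log_\rho(c/a)\rfloor=p_1+p_2,
$$
with $\log_\rho(b/a),\log_\rho(c/b),\log_\rho(c/a)$ all irrational. To produce such a $\rho$, write $A=\ln(b/a)$, $B=\ln(c/b)$; since the first two requirements are symmetric in the two ratios we may assume $A\ge B>0$, so $\theta:=B/A\in(0,1]$. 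The key elementary fact is that for any real $\theta>0$ there are infinitely many odd $p$ with $\lfloor\theta p\rfloor$ odd: if $\theta$ is irrational this follows from equidistribution of $\{\theta p\}$ along odd $p$; if $\theta=u/v$ in lowest terms then $\lfloor\theta p\rfloor\bmod 2$ is periodic in $p$ (period dividing $2v$) and, when $\theta\le 1$, a short inspection of the odd residues modulo $2v$ shows the value $1$ is attained. Apply this to get a large odd $p_1$ with $p_2:=\lfloor\theta p_1\rfloor$ odd; put $\delta:=\theta p_1-p_2\in[0,1)$ and set $\ln\rho:=A/(p_1+\eta)$ for a small generic $\eta>0$. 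Then $\log_\rho(b/a)=p_1+\eta$ and $\log_\rho(c/b)=\theta(p_1+\eta)=p_2+\delta+\theta\eta$, so for $\eta$ small the integer parts are the odd numbers $p_1,p_2$, the fractional parts sum to $\eta+\delta+\theta\eta<1$, hence $\lfloor\log_\rho(c/a)\rfloor=p_1+p_2$; a generic $\eta$ also makes the three logarithms irrational. (In particular $\rho$ may be taken close to $1$.)

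Now fix $n$ and abbreviate $x_1=n+am<x_2=n+bm<x_3=n+cm$ and $\iota(x)=\lfloor\log_\rho x\rfloor$. For every $m$ one has $\tfrac{n+bm}{n+am}<\tfrac ba$ (equivalently $an<bn$) and $\tfrac{n+bm}{n+am}\to\tfrac ba$ as $m\to\infty$, and likewise $\tfrac{n+cm}{n+bm}\to\tfrac cb$ and $\tfrac{n+cm}{n+am}\to\tfrac ca$ from below. Hence $\log_\rho\tfrac{x_2}{x_1}$ lies in $(p_1,p_1+1)$ for all large $m$ (it is always below the irrational $\log_\rho(b/a)\in(p_1,p_1+1)$ and converges to it), and using $\lfloor u+v\rfloor-\lfloor u\rfloor=\lfloor\{u\}+v\rfloor$,
$$
\iota(x_2)-\iota(x_1)=\bigl\lfloor\{\log_\rho x_1\}+\log_\rho\tfrac{x_2}{x_1}\bigr\rfloor\in\{p_1,\,p_1+1\}\qquad(m\text{ large}).
$$
The same reasoning gives, for all large $m$, $\iota(x_3)-\iota(x_2)\in\{p_2,p_2+1\}$ and $\iota(x_3)-\iota(x_1)\in\{p_1+p_2,\,p_1+p_2+1\}$ (the last using $\lfloor\log_\rho(c/a)\rfloor=p_1+p_2$). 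With $g_1=\iota(x_2)-\iota(x_1)$ and $g_2=\iota(x_3)-\iota(x_2)$ we get $g_1+g_2=\iota(x_3)-\iota(x_1)\le p_1+p_2+1$, so $(g_1,g_2)\neq(p_1+1,p_2+1)$; since $p_1,p_2$ are odd this means $g_1,g_2$ are not both even. Therefore the residues $\iota(x_1)$, $\iota(x_1)+g_1$, $\iota(x_1)+g_1+g_2$ modulo $2$, i.e.\ $\phi(x_1),\phi(x_2),\phi(x_3)$, are not all equal. This holds for all $m$ beyond some $M_n$, which is exactly what we need.

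The only genuinely non-routine ingredient is the existence of $\rho$ — concretely the claim that for every real $\theta>0$ there are infinitely many odd $p$ with $\lfloor\theta p\rfloor$ odd; this is where the small case analysis (rational versus irrational $\theta$, and the parities of numerator and denominator) lives, and reducing to $\theta=B/A\le1$ keeps it clean. Everything after that is bookkeeping around the monotone convergences $\tfrac{n+\beta m}{n+\alpha m}\nearrow\beta/\alpha$. It is also worth noting the symmetric variant in which $\lfloor\log_\rho(c/a)\rfloor=p_1+p_2+1$ and $p_1,p_2$ are instead taken even, which works equally well and may be convenient in some arithmetic configurations.
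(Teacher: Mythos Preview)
Your argument is correct. Both you and the paper build a logarithmic-scale $2$-colouring, but the mechanisms differ. The paper sets $y=\max\{c/b,\,b/a\}$, chooses $x\in(y,\,c/a)$ and $l\in(xy,\,x\cdot c/a)$, and colours the intervals $[l^m,\,xl^m)$ and $[xl^m,\,l^{m+1})$ alternately; because each of the three ratios $b/a$, $c/b$, $c/a$ lies in $(1,l)$, a direct interval calculation shows that whenever $n+ak$ and $n+bk$ land in the same colour class, $n+ck$ is forced into the other. Your colouring $\phi(z)=\lfloor\log_\rho z\rfloor\bmod 2$ with $\rho$ close to $1$ instead lets each ratio span \emph{many} $\rho$-intervals, and you control the parity of the number of intervals crossed via the constraints ``$p_1,p_2$ odd and $\lfloor\log_\rho(c/a)\rfloor=p_1+p_2$''; the conclusion then falls out of the pigeonhole observation that $g_1+g_2\le p_1+p_2+1$ forbids $(g_1,g_2)=(p_1+1,p_2+1)$, so at least one of $g_1,g_2$ is odd. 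The paper's route is shorter and needs no auxiliary arithmetic lemma; yours isolates the combinatorial obstruction cleanly and, as you note, admits the symmetric variant with $p_1,p_2$ even and $\lfloor\log_\rho(c/a)\rfloor=p_1+p_2+1$.

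One small slip in wording: your ``key elementary fact'' is stated for every real $\theta>0$, but it fails when $\theta$ is an even integer (then $\lfloor\theta p\rfloor=\theta p$ is always even). This is harmless, since you have already reduced to $\theta\le 1$ before invoking it, and for rational $\theta=u/v\in(0,1]$ the claim is indeed the routine check you describe (e.g.\ pair odd $p$ with $2v-p$ and use $\lfloor up/v\rfloor+\lfloor u(2v-p)/v\rfloor=2u-1$ when $v\nmid p$).
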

Fortunately for some though, for the rest case, the answer to Question \ref{Q1} is positive.
\begin{thm}\label{T1}
	For any $r\in \N$, any two distinct natural numbers $a,b$ and any $2$-coloring of $\N$, there exist two sets $B,C\subset \N$ with $|B|=r$ and $|C|=\infty$ such that there exists some color which contains $B+aC$ and $B+bC$.
\end{thm}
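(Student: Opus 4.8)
The plan is to reduce the problem to a statement about a single ``column'' ($r=1$), prove that by hand with van der Waerden's theorem plus a colour‑swapping identity, and then amplify to arbitrary $r$ using the dynamics of the orbit closure — the last step being where the Kronecker factor and joinings in the keyword list do the real work. Throughout write $A_1=\phi^{-1}(1)$, $A_2=\phi^{-1}(2)$, assume WLOG $a<b$, and set $e=b-a$.

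\emph{Reductions and the degenerate case.} First I would note that it suffices to produce a colour $i$ and a set $B$ with $|B|=r$ such that $\{m\in\N:\ \beta+am\in A_i\text{ and }\beta+bm\in A_i\text{ for every }\beta\in B\}$ is infinite; this set is then the required $C$. Second, I would dispose of the case where $\phi$ is eventually constant on some arithmetic progression $s+d\N$: if $\phi\equiv i$ on $\{s+dk:k\ge k_0\}$, put $B=(s+dk_0)+d\{0,1,\dots,r-1\}$ and $C=d'\N\cap[m_0,\infty)$ with $d'=\operatorname{lcm}\!\big(d/\gcd(d,a),\,d/\gcd(d,b)\big)$ and $m_0$ large; then $(B+aC)\cup(B+bC)\subseteq s+d\N$ above $s+dk_0$, hence has colour $i$. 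So from now on I may assume $\phi$ is \emph{not} eventually constant on any arithmetic progression — equivalently, both colours occur infinitely often inside every progression.

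\emph{The case $r=1$.} Suppose for contradiction that for every $n$ only finitely many $m$ satisfy $\phi(n+am)=\phi(n+bm)$. Having two colours, for each $n$ there is $M_n$ with $\phi(n+bm)=3-\phi(n+am)$ for all $m\ge M_n$; substituting $q=n+am$ gives
\[
\phi(q+em)=3-\phi(q)\qquad\text{whenever }am\le q\ \text{and}\ m\ge M_{\,q-am}.
\]
Fix $J=b$ and $\Theta=a\,\max_{0\le j\le J}M_j+J<\infty$. For every $q\ge\Theta$ this identity holds for all $m$ in an interval of length $\asymp J/a\ge 1$, forcing $\phi$ to be constant (value $3-\phi(q)$) on an arithmetic progression of common difference $e$ and length $\ge 2$ located near $(b/a)q$. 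The progressions coming from $q$ and from $q+e$ share a term (this is why $J=b$), so they have the same colour, whence $\phi(q)=\phi(q+e)$ for all $q\ge\Theta$. Thus $\phi$ is eventually periodic, hence eventually constant on each residue class mod $e$, contradicting the standing assumption. Therefore there exist $n$, a colour $i$ and an infinite $C$ with $\phi(n+am)=\phi(n+bm)=i$ on $C$; applying this to a shift $T^N\phi$ one may in addition take $n\ge N$.

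\emph{Amplification to general $r$, and the main obstacle.} I would adjoin one new column at a time: given a colour $i$, a finite $B_0$ and an infinite $C_0$ with $\phi(\beta+am)=\phi(\beta+bm)=i$ for $\beta\in B_0$, $m\in C_0$, produce $n\notin B_0$ (large), a colour $i'$, and an infinite $C_1\subseteq C_0$ with $\phi(n+am)=\phi(n+bm)=i'$ on $C_1$; after $2r$ steps, pigeonholing over the two colours yields $r$ distinct columns sharing a colour and a common infinite $C$. The whole difficulty is this relativised version of the $r=1$ statement: the van der Waerden argument above used that $m$ ranges over a full progression and it breaks down once $m$ is confined to the possibly sparse set $C_0$ (still more so when $i'$ is prescribed). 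This is the point at which I would bring in topological dynamics. Form the orbit closure $(X,T)$ of $\phi$ under the shift, its maximal equicontinuous (Kronecker) factor $\pi\colon X\to K$ with $K$ a compact abelian group and $Tk=k+\alpha$, and the self‑joining $(X\times X,\ T^a\times T^b)$ with the orbit closure $Z$ of $(\phi,\phi)$ inside it. A failure of the relativised statement should yield, by a pigeonhole/compactness argument over longer and longer coordinate windows, a point $(W,\overline W)\in Z$ with $\overline W$ the colour‑swap of $W$, together with times $m_k\to\infty$ along which $am_k\alpha$ and $bm_k\alpha$ converge in $K$; analysing this through $\pi$ — using that $(\pi\times\pi)(Z)$ is a coset of the closed subgroup generated by $(a\alpha,b\alpha)$, i.e. the relevant joining of $K$ with itself — should reproduce the eventual periodicity of the previous paragraph and hence a contradiction. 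Conversely, the same analysis should show that along an infinite set of $m$ the pair $(am\alpha,bm\alpha)$ returns arbitrarily near $(0,0)$ in $K\times K$, so that a prescribed finite block of coordinates of $\phi$ is ``almost fixed'' simultaneously by the shifts by $am$ and by $bm$; choosing such a block on which $\phi$ already exhibits a size‑$r$ monochromatic pattern (possible since each $A_i$ is infinite) then delivers the $r$ columns. I expect the genuine technical work to be exactly here: identifying which cylinder sets of $X$ are regular enough with respect to $\pi$ to be stabilised by the recurrence times — which is also the precise place where having only two colours is used.
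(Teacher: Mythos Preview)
Your $r=1$ argument is fine, and so is the degenerate reduction. The gap is entirely in the amplification. The column-by-column induction needs a relativised $r=1$ statement --- given a sparse infinite $C_0$, find a new $n$ and an infinite $C_1\subseteq C_0$ with $\phi(n+am)=\phi(n+bm)$ on $C_1$ --- and you give no argument for it; your own $r=1$ proof genuinely requires $m$ to range over a full progression (that is exactly what makes the $m$-intervals for $q$ and $q+e$ overlap). The alternative you float, that recurrence of $(am\alpha,bm\alpha)$ near $0$ in the Kronecker factor $K$ forces a prescribed finite block of $\phi$ to be ``almost fixed'' by $T^{am}$ and $T^{bm}$, is false as stated: closeness in the equicontinuous factor $K$ does not lift to closeness in $X$ unless the system is itself equicontinuous. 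Saying you must ``identify which cylinder sets are regular enough with respect to $\pi$'' names the obstruction but does not remove it, and for a generic two-colouring no cylinder will be $\pi$-measurable.

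The paper's route is structurally different and avoids the induction altogether. It first passes to a minimal point $(y,z)$ for $\sigma=T^a\times T^b$ in the orbit closure of $(x,x)$ and proves a dichotomy: either $(y,z)$ has a rigid periodic-along-progressions structure (there is $d$ with $y(d)\ne z(d)$ and $y,z$ constant along $d+a(b-a)\N$, $d+b(b-a)\N$ respectively), or some $\tau$-shift of $(y,z)$ hits the diagonal cylinder $\{y(0)=z(0)\}$ syndetically under $\sigma$. The rigid branch is handled by a lengthy but purely combinatorial argument that manufactures an arithmetic progression of length $r$ using Gowers' effective Szemer\'edi theorem; nothing in your sketch covers this case. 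In the syndetic branch one takes an ergodic $\sigma$-invariant measure $\lambda$ on the minimal set, and the Kronecker factor is used not to stabilise cylinders but to control ergodic averages: decomposing $1_{E_1}$ into its Kronecker part plus an orthogonal remainder shows that the set of $n$ with $\lambda\big(\tau^{-abn}(E_1\times E_1)\big)\ge\delta$ has positive upper Banach density. The step that replaces your induction is Bergelson's finite intersection theorem: from events of measure uniformly $\ge\delta$ one extracts $r$ of them whose common intersection still has positive measure, producing all $r$ elements of $B$ simultaneously. This ``all at once'' mechanism, together with the separate combinatorial case, is what is missing from your plan.
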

Clearly, for any $2$-coloring of $\N$, there must be a cell which is ``large" on numerous occasions. Rough speaking, the property of $2$-colorings ensures the existence of the monochromatic configuration 
$(B+aC)\cup (B+bC)$.

\subsection*{Structure of the paper.} In Section \ref{SP}, we recall some notions and results. In Section \ref{SA}, we prove Theorem \ref{T1}. In Section \ref{SC}, we show Propositions \ref{P1}, \ref{P2}. 
%In Section \ref{FD}, we do some further discussions.
\subsection*{Acknowledgement}
The first author is partially supported by NNSF of China (12101517, 12171400) and Fujian Natural Science Foundation (2023J05009). The second author is supported by NNSF of China (123B2007, 12371196). Two authors' thanks go to Wen Huang, Song Shao and  Xiangdong Ye for their useful suggestions, and Florian Richter for informing us the fact that Ramsey theorem implies that when $d=1$, the answer to Question \ref{Q1} is positive.
\section{Preliminaries}\label{SP}
\subsection{Notation}
\begin{itemize}
%	\item The set of integers (resp. natural numbers $\{1,2,\ldots\}$, rational numbers, real numbers, complex numbers) is denoted by $\Z$ (resp. $\N$, $\Q$, $\R$, $\C$).
%	\item The set of polynomials with integer (resp. real) coefficients is denoted by $\Z[n]$ (resp. $\R[t]$).
	\item For a topological space $X$, $\mathcal{B}(X)$ denotes the Borel $\sigma$-algebra on $X$.
%	\item Let $(S,+)$ be a semigroup. Let $A,B\subset S$ and $k\in \N$. $A+B$ denotes the set $\{a+b:a\in A,b\in B\}$ and $kA$ denotes the set $\{a+\cdots+a\ (k\ \text{times}):a\in A\}$.
%	\item Let $X$ be a non-empty set. Let $f:X\to X$ be a map and $A$ be a non-empty subset of $X$. $f(A)$ denotes the set $\{f(x):x\in A\}$.
%	\item Let $X$ be a non-empty set and $A$ be a non-empty subset of $X$. Define $1_{A}:X\to \{0,1\}$ by the following: If $x\in A$, $1_{A}(x)=1$; otherwise, $1_{A}(x)=0$.
%	\item Let $X$ be a set. Let $|X|$ denotes the cardinality of $X$.
	\item Let $A$ be a non-empty subset of $\N$. The \textbf{upper Banach density} of $A$ is defined as $$\limsup_{N-M\to\infty}\frac{|A\cap \{M+1,\ldots,N\}|}{N-M}.$$
\end{itemize}
\subsection{A Gowers's result}
In \cite{Gowers}, Gowers built the effective version of Szemer\'edi's theorem on arithmetic progressions.
\begin{thm}\label{CT1}
	$($\cite[Theorem 1.3]{Gowers}$)$ For each $k\in \N$ and each sufficiently large natural number $N$, every subset of $\{1,\ldots,N\}$ with at least $$N(\log \log N)^{-2^{-2^{k+9}}}$$ elements contains an arithmetic progression of length $k$.
\end{thm}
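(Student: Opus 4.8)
The plan is to prove this by a \emph{density-increment} argument driven by the Gowers uniformity norms. First I would reduce to a cyclic group: choosing a prime $N'$ with $kN < N' \le 2kN$ (Bertrand's postulate) and embedding $\{1,\dots,N\}$ into $\mathbb{Z}_{N'}$, any $k$-term progression of $\mathbb{Z}_{N'}$ contained in the image of $\{1,\dots,N\}$ cannot wrap around, so it is a genuine arithmetic progression; hence it suffices to show that every $A\subseteq \mathbb{Z}_{N'}$ of density $\delta\ge c_k(N)$ contains a nontrivial $k$-AP, where $c_k(N)$ will be optimized at the end. For $d\ge 1$ and $f:\mathbb{Z}_{N'}\to\C$ put
$$\|f\|_{U^d}^{2^d}=\mathbb{E}_{x,h_1,\dots,h_d}\ \prod_{\omega\in\{0,1\}^d}\mathcal{C}^{|\omega|}f(x+\omega\cdot h),$$
where $\mathcal{C}$ denotes complex conjugation and $|\omega|=\omega_1+\dots+\omega_d$; equivalently $\|f\|_{U^d}^{2^d}=\mathbb{E}_h\|\Delta_h f\|_{U^{d-1}}^{2^{d-1}}$ with $\Delta_h f(x)=f(x)\overline{f(x+h)}$. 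The argument then rests on two ingredients: a generalized von Neumann inequality and an inverse/increment step.

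For the first, write $f=1_A-\delta$. Applying the Cauchy--Schwarz inequality once for each of the $k-1$ difference parameters that arise when expanding the $k$-AP count, one bounds the difference between the number of $k$-APs in $A$ and its expected value $\delta^k(N')^2$ by $O_k\big(\|f\|_{U^{k-1}}(N')^2\big)$. Thus if $\|f\|_{U^{k-1}}\le \tfrac12\delta^k$ then $A$ already contains $\gg_k \delta^k(N')^2$ many $k$-APs and we are done. So it remains to treat the \emph{non-uniform} case $\|1_A-\delta\|_{U^{k-1}}>\tfrac12\delta^k=:\alpha$.

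The inverse step is the core of the proof and the main obstacle: one must show that $\|1_A-\delta\|_{U^{k-1}}\ge\alpha$ forces $A$ to have density at least $\delta+c_1(\delta,k)$ on some arithmetic progression inside $\{1,\dots,N\}$ of length at least $N^{c_2(\delta,k)}$. This is established by induction on the degree $d=k-2$. The base case $d=1$ is classical: $\|f\|_{U^2}$ large is equivalent, via Plancherel, to $1_A$ having a large nonzero Fourier coefficient, which by a standard pigeonholing on a suitably fine partition into progressions yields a density increment on a long sub-AP. For $d\ge 2$, the identity $\|f\|_{U^{d+1}}^{2^{d+1}}=\mathbb{E}_h\|\Delta_h f\|_{U^{d}}^{2^{d}}$ shows that $\Delta_h f$ fails to be $U^d$-small for a positive-density set of $h$; the inductive hypothesis then produces, for each such $h$, a correlation of $\Delta_h f$ with a ``local polynomial phase of degree $d-1$'' $\phi_h$ on a long AP. The delicate part --- where the Balog--Szemer\'edi--Gowers theorem and Freiman's theorem on sets of small doubling are used --- is to show that the families of leading coefficients of the $\phi_h$ behave like an approximate homomorphism in $h$ on a large set, hence (by Freiman) are trapped in a bounded-dimensional generalized progression, so that the $\phi_h$ can be glued into a single genuine polynomial of degree $d$; Weyl's inequality together with the geometry of numbers then linearizes this polynomial on a long sub-progression, delivering the desired density increment. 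Every pass through this induction compounds the quantitative losses, and this is exactly where the tower-type dependence on $k$ is born.

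Finally I would iterate the increment. If $A_0=A$ (density $\delta_0=\delta$ on $\{1,\dots,N\}$) contains no $k$-AP, the two ingredients give a progression $P_0$ of length $N_1\ge N^{c_2(\delta_0,k)}$ on which $A_0$ has density $\delta_1\ge\delta_0+c_1(\delta_0,k)$; affinely identifying $P_0$ with $\{1,\dots,N_1\}$ and repeating, we obtain densities $\delta_0<\delta_1<\dots$ and lengths $N_0\ge N_1\ge\dots$, the process being forced to terminate --- producing a $k$-AP --- within a number of steps that is polynomial in $1/\delta$ (with exponent depending on $k$), because the densities are increasing and bounded by $1$. For this termination to be usable, the surviving length $N_T$ must stay at least an absolute constant; unwinding $N_{i+1}\ge N_i^{c_2(\delta_i,k)}$ over the $T$ steps, this amounts to requiring $\log\log N$ to exceed a quantity built out of the constants $c_1,c_2$ from the degree induction, and optimizing that bound yields precisely the threshold $\delta\ge (\log\log N)^{-2^{-2^{k+9}}}$. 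I expect this last bookkeeping to be routine once the $\delta$-dependence of $c_1,c_2$ in the inverse step is made explicit; the genuine difficulty, and the source of the tower, is the degree-induction inverse theorem with its combined use of iterated Cauchy--Schwarz, Balog--Szemer\'edi--Gowers, Freiman's theorem, and Weyl sum estimates.
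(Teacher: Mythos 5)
The statement you were asked to prove is not proved in the paper at all: it is imported verbatim as \cite[Theorem 1.3]{Gowers}, so the only fair comparison is with Gowers' original argument. Your outline does reproduce the architecture of that argument faithfully --- reduction to $\Z_{N'}$ with $N'$ prime to avoid wrap-around, the splitting $1_A=\delta+f$, a generalized von Neumann inequality showing that $\|f\|_{U^{k-1}}\le\tfrac12\delta^k$ already forces many $k$-term progressions, and otherwise a density increment on a long sub-progression obtained from a \emph{local} inverse-type statement proved by induction on the degree using iterated Cauchy--Schwarz, Balog--Szemer\'edi--Gowers, Freiman's theorem and Weyl estimates, followed by iteration of the increment.

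Nevertheless, as a proof of the stated theorem there is a genuine gap, and you essentially flag it yourself: the inverse/increment step, which you correctly call ``the core of the proof and the main obstacle,'' is only described, not established, and everything quantitative in the statement lives inside it. In particular the specific exponent $2^{-2^{k+9}}$ is not a matter of ``routine bookkeeping once the $\delta$-dependence of $c_1,c_2$ is made explicit'': obtaining $c_1(\delta,k)$ polynomial in $\delta$ and $c_2(\delta,k)$ of the right shape is precisely the content of Gowers' quantitative Balog--Szemer\'edi theorem, his Freiman-type structure theorem with polynomial-quality bounds, and the Weyl/geometry-of-numbers linearization, which together occupy the bulk of his paper; without those explicit dependencies the iteration you set up only shows that \emph{some} bound of the form $N(\log\log N)^{-c(k)}$ (or indeed something much weaker, depending on what the induction actually yields) suffices, not the stated one. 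So the proposal should be regarded as a correct road map of the known proof rather than a proof; for the purposes of this paper, which only invokes the theorem as a black box, citing \cite{Gowers} as the authors do is the appropriate resolution.
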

\subsection{Bergelson's finite intersection theorem}
In the proof of Theorem \ref{T1}, we need the following result.
\begin{thm}\label{CT2}
	$($\cite[Theorem 1.1]{B85}$)$ Let $\{E_n\}_{n\ge 1}$ be a sequence of events in a probability space $(X,\X,\mu)$ with $$\inf_{n\ge 1}\mu(E_n)>0.$$ Then there exists a sequence $\{i_n\}_{n\ge 1}\subset \N$ such that for any $k\in \N$, one has $$\mu(E_{i_1}\cap \cdots \cap E_{i_k})>0.$$ In fact, one can take the set $\{i_1,i_2,\ldots\}$ to have positive upper Banach density.
\end{thm}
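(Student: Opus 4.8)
The plan is to repackage the events as a single Borel probability measure $\nu$ on the compact metric space $2^{\N}$ of all subsets of $\N$, and then to build the index set by a greedy/compactness argument carried out inside that space. Concretely, I would let $S\colon X\to 2^{\N}$ be the measurable map $S(x)=\{n\in\N:x\in E_n\}$ and take $\nu=S_{*}\mu$; the point is that $\nu(\{T:F\subseteq T\})=\mu\big(\bigcap_{n\in F}E_n\big)$ for every finite $F\subset\N$. Hence the theorem reduces to finding an infinite set $I=\{i_1<i_2<\cdots\}$ of positive upper Banach density with $\nu(\{T:\{i_1,\dots,i_k\}\subseteq T\})>0$ for every $k$ (any finite subset of $I$ sits inside some initial segment, whose cylinder is larger, so this suffices). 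Write $c=\inf_n\mu(E_n)$, and assume $c<1$ since otherwise $I=\N$ works.

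The one quantitative input I would extract is a positive-measure set of "dense" points. Writing $\overline d(T)=\limsup_{N}|T\cap\{1,\dots,N\}|/N$, the reverse Fatou lemma applied to the $[0,1]$-valued functions $T\mapsto|T\cap\{1,\dots,N\}|/N$ gives $\int\overline d\,d\nu\ge\limsup_N\frac1N\sum_{n\le N}\nu(\{T\ni n\})=\limsup_N\frac1N\sum_{n\le N}\mu(E_n)\ge c$. Since $0\le\overline d\le1$ and a function that is $<c$ $\nu$-almost everywhere necessarily has integral $<c$, the set $G:=\{T\in 2^{\N}:\overline d(T)\ge c\}$ must satisfy $\nu(G)>0$.

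Next I would construct $I$ in blocks: inductively finite sets $B_1<B_2<\cdots$ (listed increasingly, $I=\bigcup_j B_j$, $F_j=B_1\cup\cdots\cup B_j$, $F_0=\emptyset$) together with a nested sequence $G=D_0\supseteq D_1\supseteq\cdots$ of positive-$\nu$-measure sets with $D_j\subseteq G\cap\{T:F_j\subseteq T\}$. Given $D_j$ and a small $\varepsilon_j>0$ (sent to $0$ with $j$): every $T\in D_j$ lies in $G$, so $|T\cap\{1,\dots,N\}|>(c-\varepsilon_j)N$ for arbitrarily large $N$; thus the sets $\{T\in D_j:|T\cap\{1,\dots,N\}|>(c-\varepsilon_j)N\}$ cover $D_j$ as $N$ runs over any tail, so some $N=N_j'$ in that tail, which I take enormous (bigger than $2N_{j-1}'$ and with $\max F_j<\varepsilon_j N_j'$), has this set — call it $D_j'$ — of positive measure. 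Then every $T\in D_j'$ has more than $(c-2\varepsilon_j)N_j'$ elements in the interval $J_j=\{\max B_j+1,\dots,N_j'\}$. A finite inner greedy inside $D_j'$ now works: as long as fewer than $(c-2\varepsilon_j)N_j'$ common points have been selected, every surviving $T$ still meets $J_j$ off the selected set, and (the survivor set being a finite union over the points $m\in J_j$ of its slices $\cap\,\{T\ni m\}$) one such slice has positive measure, so we add that point. This yields $B_{j+1}\subseteq J_j$ with $|B_{j+1}|\ge(c-2\varepsilon_j)N_j'$ and $D_{j+1}:=D_j'\cap\{T:B_{j+1}\subseteq T\}$ of positive $\nu$-measure, still inside $G\cap\{T:F_{j+1}\subseteq T\}$.

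Finally I would read off the conclusion: $\nu(\{T:F_j\subseteq T\})\ge\nu(D_j)>0$ for every $j$ gives $\mu(E_{i_1}\cap\cdots\cap E_{i_k})>0$ for all $k$; and since $B_{j+1}$ lies in the interval $J_j$ of length $\le N_j'$ while $|B_{j+1}|\ge(c-2\varepsilon_j)N_j'$ with $|J_j|\to\infty$ and $\varepsilon_j\to0$, the set $I$ has upper Banach density $\ge c>0$. The step I expect to be the main obstacle is precisely the one that forces the whole construction to live inside the fixed set $G$: conditioning on the cylinders $\{T:F_j\subseteq T\}$ can destroy any density feature of the surviving sets $T$, so no "dynamic" density bound can be propagated along the induction — it is $G$, on which $\overline d(T)\ge c$ is automatically inherited by every subset, together with the pigeonhole over scales that localizes all of $D_j$ on one common window $J_j$, that makes the inner greedy possible. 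Establishing $\nu(G)>0$ from the (essentially trivial) first-moment bound and setting up this scale-localization are the technical heart; after that the bookkeeping is routine.
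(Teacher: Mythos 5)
Your argument is correct, but there is nothing in the paper to compare it against: Theorem \ref{CT2} is quoted from Bergelson \cite{B85} and the paper gives no proof of it. What you have written is a self-contained proof, and it follows a different route from Bergelson's classical one, which works in $L^2(\mu)$ with the averages $\frac{1}{N}\sum_{n\le N}1_{E_n}$, extracts a weak limit $f$ with $\int f\,d\mu\ge c$, and builds the index set from the positive-measure set where $f$ is bounded below (together with a diagonal/iteration step to get all finite intersections). Your version instead pushes $\mu$ forward to $2^{\N}$, uses reverse Fatou to get a positive-$\nu$-measure set $G$ of hit-sets of upper density at least $c$, and then runs a scale-localized greedy selection; the individual steps check out: the identity $\nu(\{T:F\subseteq T\})=\mu\big(\bigcap_{n\in F}E_n\big)$, the first-moment bound $\int\overline d\,d\nu\ge c$ forcing $\nu(G)>0$, the tail-covering pigeonhole producing $D_j'$, the finite greedy (which cannot stall because every survivor has more than $(c-2\varepsilon_j)N_j'$ elements of $J_j$, and each extraction of a positive-measure slice is legitimate since the survivor set is a finite union of slices), and the count $|I\cap J_j|\ge(c-2\varepsilon_j)|J_j|$ with $|J_j|\to\infty$, which in fact yields upper Banach density at least $c$, slightly more than the statement requires. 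Your approach is more elementary (only Fatou and countable additivity, no weak compactness) and gives the Banach density bound directly. Two cosmetic points: the parenthetical ``whose cylinder is larger'' is backwards --- the cylinder over a larger index set is smaller; what you actually use, namely that $\bigcap_{n\in F}E_n\supseteq\bigcap_{n\in F_j}E_n$ for $F\subseteq F_j$, is the correct containment --- and you should record $\varepsilon_j<c/2$ so that the blocks $B_{j+1}$ are nonempty. Neither affects correctness.
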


\subsection{Conditional expectation and disintegration of a measure}
Let $\pi:X\to Y$ be a measure preserving transformation between two Lebesgue probability spaces $(X,\X,\mu)$ and $(Y,\Y,\nu)$. For any $f\in L^{1}(\mu)$, the \textbf{conditional expectation of $f$ with respect to $Y$} is the function $\E_{\mu}(f|Y)$, defined in $L^{1}(\mu)$, such that for any $A\in \Y$, $$\int_{\pi^{-1}A}fd\mu=\int_{\pi^{-1}A}\E_{\mu}(f|Y)d\mu.$$
Clearly, there exists an $\tilde{f}\in L^{1}(\nu)$ such that $\tilde{f}\circ \pi=\E_{\mu}(f|Y)$ almost everywhere. So, we view $\E_{\mu}(f|Y)$ as an element of $L^{1}(\nu)$ sometimes.

Moreover, there exists a unique $\Y$-measurable map $Y\to \mathcal{M}(X,\X),y\mapsto \mu_y$, called the \textbf{disintegration of $\mu$ with respect to $Y$}, under neglecting $\nu$-null sets such that for any $f\in L^{\infty}(\mu)$, $$\E_{\mu}(f|Y)(x)=\int_{X}fd\mu_{\pi(x)}$$ for $\mu$-a.e. $x\in X$, where $\mathcal{M}(X,\X)$ is the collection of probability measures on $(X,\X)$, endowed with standard Borel structure.
\subsection{Measure preserving systems, factors and joingings}
A tuple $(X,\X,\mu,T)$ is a \textbf{measure preserving system} if $(X,\X,\mu)$ is a Lebesgue probability space and $T:X\to X$ is an invertible measure preserving transformation. We say that $(X,\X,\mu,T)$ is \textbf{ergodic} or $\mu$ is \textbf{ergodic} (under $T$) if only the $T$-invariant subsets in $\X$ have measure $0$ or $1$.

Two measure preserving systems $(X,\X,\mu,T)$ and $(Y,\Y,\nu,S)$ are \textbf{isomorphic} if there exists an invertibe measure preserving transformation $\Phi:X_0\to Y_0$ with $\Phi \circ T=S\circ\Phi$, where $X_0$ is a $T$-invariant full measurable subset of $X$ and $Y_0$ is an $S$-invariant full measurable subset of $Y$.

A \textbf{factor} of a measure preserving system $(X,\X,\mu,T)$ is a $T$-invariant sub-$\sigma$-algebra of $\X$. A \textbf{factor map} from $(X,\X,\mu,T)$ to $(Y,\Y,\nu,S)$ is a measurable map $\pi:X_{0}\rightarrow Y_{0}$ with $\pi\circ T=S\circ\pi$ and such that $\nu$ is the image of $\mu$ under $\pi$, where $X_0$ is a $T$-invariant full measurable subset of $X$ and $Y_0$ is an $S$-invariant full measurable subset of $Y$. In this case, $\pi^{-1}(\mathcal{D})$ is a factor of $(X,\X,\mu,T)$. And every factor of $(X,\X\mu,T)$ can be obtained in this way.

In this paper, we focus on the \textbf{Kronecker factor} of $(X,\X,\mu,T)$, which is the sub-$\sigma$-algebra of $\X$ spanned by the eigenfunctions of $X$ with respect to $T$. When $(X,\X,\mu,T)$ is ergodic, by \cite[Proposition 4.13.(iv)]{HK}, its Kronecker factor can be viewed as the measure preserving system $(Z,\B(Z),m,R)$, where $Z$ is a compact abelian group, $m$ is the Haar measure on $Z$, and $R$ is a translation on $Z$.

In this paper, we need to use some properties of Kronecker factors.
\begin{lemma}\cite[Theorem 4.16]{HK}\label{eigenfunctions and invariant}
Let $(X,\X,\mu,T)$ be a measure preserving system. Then the subspace of $L^2(\mu\times \mu)$ consisting of $T\times T$-invariant functions is the subspace spanned by functions of the form $\overline{f}_1\otimes f_2$, where each $f_i\in L^2(X,\mu),i=1,2$ is an eigenfunction of $X$ with respect to $T$ such that the corresponding eigenvalues are equal.
\end{lemma}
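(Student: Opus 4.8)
I would prove the two inclusions separately; the nontrivial one runs through the standard identification of $L^2(\mu\times\mu)$ with the space of Hilbert--Schmidt operators on $L^2(\mu)$. For the easy inclusion, first note that if $f_1,f_2\in L^2(\mu)$ are eigenfunctions of $T$ with a common eigenvalue $\lambda$, then $|\lambda|=1$, since the Koopman operator $Ug:=g\circ T$ is unitary on $L^2(\mu)$; a one-line computation then gives $(T\times T)(\overline f_1\otimes f_2)=\overline{\lambda}\lambda\,\overline f_1\otimes f_2=\overline f_1\otimes f_2$, so the closed span of all such products consists of $T\times T$-invariant functions.

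For the main inclusion I would associate to each $F\in L^2(\mu\times\mu)$ the Hilbert--Schmidt operator $A_F$ on $L^2(\mu)$ with kernel $F$, that is, $(A_Fg)(x)=\int F(x,y)g(y)\,d\mu(y)$; the map $F\mapsto A_F$ is a unitary isomorphism onto the Hilbert--Schmidt operators, with $\|A_F\|_{\mathrm{HS}}=\|F\|_{L^2(\mu\times\mu)}$. A change of variables exploiting the $T$-invariance of $\mu$ shows $A_{(T\times T)F}=UA_FU^{*}$, and hence $F$ is $T\times T$-invariant exactly when $A_F$ commutes with $U$.

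So fix $F$ with $A_FU=UA_F$. Then $A_F^{*}A_F$ is compact, positive, self-adjoint and also commutes with $U$, so by the spectral theorem for compact self-adjoint operators its eigenspaces for nonzero eigenvalues are finite-dimensional and $U$-invariant, and on each of them $U$ acts as a finite-dimensional unitary, hence is diagonalizable with unit-modulus eigenvalues. Collecting orthonormal eigenbases over these eigenspaces yields an orthonormal basis $\{e_j\}$ of $(\ker A_F)^{\perp}$ consisting of eigenfunctions of $T$, say $Ue_j=\alpha_je_j$; the commutation $A_FU=UA_F$ then forces $U(A_Fe_j)=\alpha_j(A_Fe_j)$, so each $A_Fe_j$ is again an eigenfunction of $T$ with eigenvalue $\alpha_j$. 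Expanding the kernel of $A_F$ in the basis $\{e_j\}$ (padded out by an orthonormal basis of $\ker A_F$, whose terms vanish) gives the $L^2(\mu\times\mu)$-convergent expansion $F=\sum_j (A_Fe_j)\otimes\overline{e_j}$, convergence holding because $\sum_j\|A_Fe_j\|^2=\|F\|_{L^2(\mu\times\mu)}^2<\infty$. Writing $(A_Fe_j)\otimes\overline{e_j}=\overline{g_j}\otimes h_j$ with $g_j:=\overline{A_Fe_j}$ and $h_j:=\overline{e_j}$, both $g_j$ and $h_j$ are eigenfunctions of $T$ with the common eigenvalue $\overline{\alpha_j}$ (conjugation inverts the eigenvalue), so every partial sum lies in the span of the prescribed products and $F$ lies in their closed span.

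The step I expect to be the crux is this diagonalization of $A_F$ against the eigenfunctions of $T$: it is exactly here that compactness of $A_F$ is indispensable, since a general closed $U$-invariant subspace may contain no eigenfunctions at all (e.g.\ when $T$ is weakly mixing), and it is the finite-dimensionality of the nonzero eigenspaces of $A_F^{*}A_F$ that rescues the argument. The remaining ingredients --- the identity $A_{(T\times T)F}=UA_FU^{*}$, the $L^2$-convergence of the kernel expansion, and the conjugation bookkeeping needed to land precisely in the form $\overline f_1\otimes f_2$ with equal eigenvalues --- are routine.
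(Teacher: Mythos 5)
The paper does not prove this lemma at all; it is quoted verbatim from Host--Kra \cite[Theorem 4.16]{HK}, so there is no internal proof to compare against. Your argument is correct and is essentially the classical proof underlying that citation: identify $L^2(\mu\times\mu)$ with Hilbert--Schmidt kernels, observe that $T\times T$-invariance of $F$ is equivalent to $A_F$ commuting with the Koopman operator $U$, diagonalize the compact positive operator $A_F^{*}A_F$ to extract an orthonormal basis of $(\ker A_F)^{\perp}$ consisting of eigenfunctions, and expand the kernel as $\sum_j (A_Fe_j)\otimes\overline{e_j}$. The only point worth adding is the one-line remark that $\|A_Fe_j\|^{2}=\langle A_F^{*}A_Fe_j,e_j\rangle>0$, so each $A_Fe_j$ is genuinely a (nonzero) eigenfunction; with that noted, the proof is complete and, usefully, nowhere requires ergodicity, matching the generality in which the lemma is stated.
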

By the above lemma, one can immediately give the following corollary.
\begin{cor}\label{orthogonal with eigenfunctions} Let $(X,\X,\mu,T)$ be a measure preserving system. Let $\psi\in L^2(\mu)$ and it is orthogonal to all eigenfunctions of $X$ with respect to $T$. Then for the sub-$\sigma$-algebra $\mathcal{I}$ of $\X\otimes \X$ spanned by all $T\times T$-invariant functions, one has that $$\norm{\mathbb{E}_{\mu\times \mu}(\psi\otimes \overline{\psi}|\mathcal{I})}_{2}=0.$$
\end{cor}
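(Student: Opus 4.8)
The plan is to realize $\E_{\mu\times\mu}(\cdot\,|\mathcal I)$ as the orthogonal projection of $L^2(\mu\times\mu)$ onto the closed subspace $H$ of $T\times T$-invariant functions, and then to show that $\psi\otimes\overline\psi\perp H$, which immediately forces that projection, and hence its $L^2$-norm, to vanish. Recall that the sub-$\sigma$-algebra $\mathcal I$ spanned by the $T\times T$-invariant functions is (modulo $\mu\times\mu$-null sets) precisely the $\sigma$-algebra of $(T\times T)$-invariant sets: a function is $(T\times T)$-invariant if and only if it is measurable with respect to the invariant $\sigma$-algebra. Consequently $L^2(X\times X,\mathcal I,\mu\times\mu)=H$, and $\E_{\mu\times\mu}(g\,|\mathcal I)$ is the orthogonal projection of $g$ onto $H$ for every $g\in L^2(\mu\times\mu)$. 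In particular, $\norm{\E_{\mu\times\mu}(\psi\otimes\overline\psi\,|\mathcal I)}_{2}=0$ as soon as $\langle \psi\otimes\overline\psi,\,h\rangle=0$ for every $h\in H$.

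By Lemma \ref{eigenfunctions and invariant}, $H$ is the closed linear span of the functions $\overline{f}_1\otimes f_2$, where $f_1,f_2\in L^2(\mu)$ are eigenfunctions of $(X,\X,\mu,T)$ whose eigenvalues coincide, so it suffices to check orthogonality against each such generator. For a fixed pair, Fubini's theorem gives
$$\langle \psi\otimes\overline\psi,\ \overline{f}_1\otimes f_2\rangle_{L^2(\mu\times\mu)}=\Big(\int_X \psi f_1\,d\mu\Big)\Big(\int_X \overline{\psi}\,\overline{f_2}\,d\mu\Big)=\langle\psi,\overline{f}_1\rangle\ \overline{\langle\psi,\overline{f_2}\rangle}.$$
The key observation is that the collection of eigenfunctions is closed under complex conjugation: if $Tf_1=\lambda f_1$ then $T\overline{f}_1=\overline{\lambda}\,\overline{f}_1$ with $|\overline\lambda|=1$, so $\overline{f}_1$ is again an eigenfunction (and similarly for $\overline{f_2}$). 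Since $\psi$ is orthogonal to every eigenfunction by hypothesis, it follows that $\langle\psi,\overline{f}_1\rangle=0$, hence the displayed inner product vanishes.

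Combining the two paragraphs, $\psi\otimes\overline\psi$ is orthogonal to a spanning set of $H$, hence to all of $H$, so its projection $\E_{\mu\times\mu}(\psi\otimes\overline\psi\,|\mathcal I)$ onto $H$ is the zero element of $L^2(\mu\times\mu)$, which yields $\norm{\E_{\mu\times\mu}(\psi\otimes\overline\psi\,|\mathcal I)}_{2}=0$. There is essentially no hard step here; the only points that deserve a word of care are the identification of $L^2$ of the invariant $\sigma$-algebra with the space of invariant functions (so that the conditional expectation onto $\mathcal I$ is genuinely an orthogonal projection onto $H$) and the conjugation-symmetry of the eigenfunction set, both of which are standard.
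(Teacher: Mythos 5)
Your proposal is correct and follows essentially the same route as the paper: both reduce, via Lemma \ref{eigenfunctions and invariant}, to checking that $\psi\otimes\overline{\psi}$ is orthogonal to the spanning functions $\overline{f}_1\otimes f_2$ with equal eigenvalues, and then invoke the hypothesis that $\psi$ is orthogonal to all eigenfunctions (using that eigenfunctions are closed under conjugation). The only difference is that you spell out the identification of $\E_{\mu\times\mu}(\cdot\,|\mathcal I)$ with the orthogonal projection onto the invariant subspace, which the paper leaves implicit.
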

\begin{proof}
By Lemma \ref{eigenfunctions and invariant}, it is left to show that $$\int (\psi\otimes \overline{\psi} )\cdot (\overline{f}_1\otimes f_2)d\mu\times \mu=0, $$ where each $f_i\in L^2(X,\mu),i=1,2$ is an eigenfunction of $X$ with respect to $T$ such that the corresponding eigenvalues are equal.
By the assumption on $\psi$, we have that $\int \psi\cdot fd\mu=\int \overline{\psi}\cdot fd\mu=0$, where $f\in L^2(\mu)$ is an eigenfunction of $X$. This finishes the proof.
\end{proof}

The following known statement gives a property of the Kronecker factor. It can be deduced from a similar argument of \cite[Second paragraph of the statements after Definition 3.3]{F77}.
\begin{lemma}\label{lem6}
	Let $(X,\X,\mu,T)$ be a measure preserving system. Then for any $k\in\N$, the Kronecker factor of $(X,\X,\mu,T)$ is isomorphic to one of $(X,\X,\mu,T^k)$.
\end{lemma}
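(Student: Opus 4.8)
The plan is to show that the two $\sigma$-algebras in the statement coincide. Write $\mathcal{K}_T\subseteq\X$ for the sub-$\sigma$-algebra spanned by the eigenfunctions of $(X,\X,\mu,T)$ and $\mathcal{K}_{T^k}$ for the one spanned by the eigenfunctions of $(X,\X,\mu,T^k)$; the goal is $\mathcal{K}_T=\mathcal{K}_{T^k}$. One inclusion is immediate: if $f\in L^2(\mu)$, $f\neq0$, satisfies $Tf=\lambda f$ (so necessarily $|\lambda|=1$), then $T^kf=\lambda^k f$, so $f$ is an eigenfunction of $T^k$; hence every eigenfunction of $T$ is $\mathcal{K}_{T^k}$-measurable, giving $\mathcal{K}_T\subseteq\mathcal{K}_{T^k}$.

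For the reverse inclusion I would reduce everything to the linear-algebraic claim that every eigenfunction of $T^k$ is a finite $\C$-linear combination of eigenfunctions of $T$; granting this, each eigenfunction of $T^k$ is $\mathcal{K}_T$-measurable, so $\mathcal{K}_{T^k}\subseteq\mathcal{K}_T$ and the lemma follows. To prove the claim, take $g\in L^2(\mu)$, $g\neq0$, with $T^kg=\lambda g$; again $|\lambda|=1$, so $\lambda\neq0$. I would consider the finite-dimensional space $V:=\mathrm{span}_{\C}\{g,Tg,\ldots,T^{k-1}g\}\subseteq L^2(\mu)$, of dimension at most $k$. Since $T(T^ig)=T^{i+1}g$ for $0\le i\le k-2$ and $T(T^{k-1}g)=T^kg=\lambda g$, the space $V$ is $T$-invariant, and as $T$ is invertible and $\dim V<\infty$, the operator $T|_V$ is invertible on $V$. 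Moreover $T^k(T^ig)=T^i(T^kg)=\lambda\, T^ig$ for each $i$, so $(T|_V)^k=\lambda\,\mathrm{Id}_V$.

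Now, because $\lambda\neq0$ the polynomial $x^k-\lambda$ has $k$ distinct roots in $\C$, and the minimal polynomial of $T|_V$ divides $x^k-\lambda$, hence is squarefree; therefore $T|_V$ is diagonalizable over $\C$, so $V$ has a basis of eigenvectors of $T|_V$. Each such basis vector $v$ is a nonzero element of $L^2(\mu)$ with $Tv=\mu v$ for some $\mu\in\C$ (so $|\mu|=1$), i.e.\ an eigenfunction of $(X,\X,\mu,T)$; since $g\in V$, this expresses $g$ as a finite linear combination of eigenfunctions of $T$, as claimed.

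The argument uses no ergodicity, and the only input beyond the definitions is that a finite $\C$-linear combination of functions that are each measurable with respect to a given sub-$\sigma$-algebra is again measurable with respect to it. I expect the only point that will need care to be essentially notational: making sure that ``eigenfunction of $T^k$'' and ``eigenfunction of $T$'' are read with the same convention (a nonzero $f\in L^2(\mu)$ with $Tf=\lambda f$ for some $\lambda$, automatically of modulus $1$), so that the two $\sigma$-algebras in the statement really are the ones generated by these two families. As an alternative one could instead follow the route hinted at in the text via \cite{F77}: for an eigenfunction $g$ of $T^k$ one checks that $\{T^ng:n\in\Z\}$ is contained in $\bigcup_{i=0}^{k-1}\{z\,T^ig:z\in\C,\ |z|=1\}$, a finite union of compact subsets of $L^2(\mu)$, so $g$ is an almost periodic function for $T$ and hence $\mathcal{K}_T$-measurable; the finite-dimensional invariant-subspace argument above simply sidesteps the theory of almost periodic functions.
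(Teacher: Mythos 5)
Your proof is correct: both inclusions are established, the linear-algebra step is sound (the minimal polynomial of $T|_V$ divides $x^k-\lambda$ with $\lambda\neq 0$, so $T|_V$ is diagonalizable and every $T^k$-eigenfunction is a finite combination of $T$-eigenfunctions), and no ergodicity is needed, matching the generality of the statement. It is, however, a genuinely different route from the paper's: the paper gives no argument at all, only a pointer to Furstenberg's discussion following his definition of almost periodic functions, so the intended proof is the one you sketch at the end — observe that the $T$-orbit of a $T^k$-eigenfunction $g$ lies in the compact set $\bigcup_{i=0}^{k-1}\{z\,T^ig:|z|=1\}$, so $g$ is almost periodic for $T$, and then invoke the (nontrivial) characterization of the Kronecker factor as the $\sigma$-algebra of almost periodic functions. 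Your main argument buys self-containedness: it replaces that piece of structure theory with an elementary finite-dimensional invariant-subspace computation, while the Furstenberg route is shorter if one is willing to quote the almost-periodicity characterization. One small point worth making explicit when you write it up is the measurability bookkeeping: eigenfunctions are $L^2$-classes, so "spanned by the eigenfunctions" and the resulting inclusions of $\sigma$-algebras are understood modulo $\mu$-null sets, which is exactly the convention under which a finite linear combination of $\mathcal{K}_T$-measurable functions is again $\mathcal{K}_T$-measurable.
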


A \textbf{joining} of two measure preserving systems $(X,\X,\mu,T)$ and $(Y,\Y,\nu,S)$ is a probability measure $\lambda$ on $X\times Y$, invariant under $T\times S$ and whose projections on $X$ and $Y$ are $\mu$ and $\nu$, respectively. If $\lambda$ is ergodic under $T\times S$, then we say that $\lambda$ is an ergodic joining. Analogously, we can define joining on more measure preserving systems.

\subsection{Topological dynamical systems and invariant measures}
A pair $(X,T)$ is a \textbf{topological dynamical system} if $X$ is a compact metric space and $T:X\to X$ is a homeomorphism. If there is no non-empty proper closed $T$-invariant subset of $X$, we say that $(X,T)$ is \textbf{minimal}.
For any $x\in X$, we denote its orbit $\{T^{n}x:n\in\Z\}$ by $\text{orb}(x,T)$. We say that $x$ is a \textbf{minimal point} of $(X,T)$ if $(\overline{\text{orb}(x,T)},T)$ is minimal.

The following two results characterize some properties of minimal points.
%The following result (see \cite[Exercises 1.1.2]{G-book} or \cite{GH}) gives an equivalent characterization of minimal points.
\begin{lemma}\label{lem5}
	 $($\cite[Exercises 1.1.2]{G-book}, \cite{GH}$)$ Let  $(X,T)$ be a topological dynamical system and $x\in X$. $x$ is a minimal point of $(X,T)$ if and only if for any open neighborhood $U$ of $x$, $\{n\in \Z:T^{n}x\in U\}$ is syndetic\footnote{Let $A$ be a non-empty subset of $\Z$ (resp. $\N$). $A$ is \textbf{syndetic} if and only if there exists a non-empty finite subset $E$ of $\Z$ such that $A+E=\Z$ (resp. $A+E=\N$).}.
\end{lemma}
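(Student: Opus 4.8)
The plan is to prove both implications of this classical characterization of a minimal (equivalently, almost periodic or uniformly recurrent) point, with compactness of $X$ entering crucially in each direction. Throughout I write $Y=\overline{\text{orb}(x,T)}$, so that ``$x$ is a minimal point'' means exactly that $(Y,T)$ is minimal, and I fix a compatible metric $d$ on $X$.

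For the forward implication I would argue as follows. Let $U$ be an open neighbourhood of $x$ and set $A=\{n\in\Z:T^nx\in U\}$, which is nonempty since $0\in A$; since $x\in Y$, the set $U\cap Y$ is a nonempty open subset of $Y$. By minimality of $(Y,T)$, for each $y\in Y$ the orbit $\text{orb}(y,T)$ is dense in $Y$, so there is $n(y)\in\Z$ with $T^{n(y)}y\in U$, and by continuity of $T^{n(y)}$ there is an open $V_y\ni y$ with $T^{n(y)}(V_y\cap Y)\subset U$. Compactness of $Y$ gives a finite subcover $V_{y_1},\dots,V_{y_k}$, and I would take $E=\{n(y_1),\dots,n(y_k)\}$, a nonempty finite set. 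For any $m\in\Z$ one has $T^mx\in Y$, hence $T^mx\in V_{y_j}\cap Y$ for some $j$, hence $T^{n(y_j)+m}x\in U$, i.e. $n(y_j)+m\in A$; this shows $\Z\subset A-E$, i.e. $A+(-E)=\Z$ with $-E$ nonempty finite, so $A$ is syndetic.

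For the converse I would show that the syndeticity hypothesis forces $x\in\overline{\text{orb}(z,T)}$ for every $z\in Y$, which suffices: this closed $T$-invariant set then contains $\text{orb}(x,T)$ and hence equals $Y$, so $(Y,T)$ is minimal. Fix $z\in Y$ and an open $U\ni x$, and pick $\varepsilon>0$ with $B(x,2\varepsilon)\subset U$. The return-time set $A=\{n:T^nx\in B(x,\varepsilon)\}$ is syndetic, say $A+F=\Z$ with $F$ a nonempty finite set; put $E=-F$, so $A-E=\Z$. Using uniform continuity of the \emph{finite} family $\{T^m:m\in E\}$ on the compact space $X$, choose $\delta>0$ so that $d(p,q)<\delta$ implies $d(T^mp,T^mq)<\varepsilon$ for all $m\in E$. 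Since $z\in Y=\overline{\text{orb}(x,T)}$ there is $N\in\Z$ with $d(T^Nx,z)<\delta$, and since $A-E=\Z$ there is $m\in E$ with $N+m\in A$, i.e. $d(T^{N+m}x,x)<\varepsilon$. Then $d(T^mz,x)\le d\big(T^mz,T^m(T^Nx)\big)+d(T^{N+m}x,x)<2\varepsilon$, so $T^mz\in U$, as required; hence $x\in\overline{\text{orb}(z,T)}$.

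The only point that needs genuine care is the converse, where recurrence of $x$ near itself must be transported to an arbitrary point $z$ of its orbit closure: a single return time will not achieve this because $\{T^n:n\in\Z\}$ need not be equicontinuous, and it is precisely syndeticity — controlling the return-time gaps by a \emph{finite} set $E$ — that lets one modulus of uniform continuity serve all the relevant shifts simultaneously. The rest (the sign flip $E=-F$ and the triangle-inequality estimate) is routine bookkeeping.
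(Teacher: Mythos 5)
Your proof is correct in both directions. Note that the paper does not actually prove Lemma \ref{lem5}: it is quoted as a classical fact with references (Glasner's book, Gottschalk--Hedlund), so there is no internal argument to compare against. What you give is precisely the standard compactness proof found in those sources: for the forward direction, covering the minimal orbit closure $Y$ by finitely many open sets $V_{y_j}$ each mapped into $U$ by a single power $T^{n(y_j)}$, which bounds the gaps of the return-time set; for the converse, using that syndeticity reduces the family of shifts to the finite set $E$, so one modulus of uniform continuity transports the recurrence of $x$ to an arbitrary $z\in Y$ and yields $x\in\overline{\text{orb}(z,T)}$, hence minimality of $(Y,T)$. Your bookkeeping with the paper's footnote definition of syndetic ($A+E=\Z$, handled via $-E$) is also consistent, so the argument can stand as a self-contained proof of the lemma.
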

%Although the following lemma is a known result (For more details, one can see \cite[Theorem 8.7]{F-book}.), there are some additional concepts in its general statement. Here, we give a simple version and provide a proof for completeness.
\begin{lemma}\label{lem4}
$($\cite[Theorem 8.7]{F-book}$)$ Let  $(X,T)$ be a topological dynamical system. Then for any $x\in X$, there exist a sequence $\{n_i\}_{i\ge 1}\subset \N$ and a minimal point $y$ of $(X,T)$ such that $$T^{n_i}x\to y$$ as $i\to\infty$.
\end{lemma}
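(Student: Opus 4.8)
The plan is to realize $y$ inside the forward $\omega$-limit set of $x$, namely
$\omega(x):=\bigcap_{N\ge 1}\overline{\{T^{n}x:n\ge N\}}$. First I would check that $\omega(x)$ is a non-empty closed $T$-invariant subset of $X$: writing $F_N:=\overline{\{T^{n}x:n\ge N\}}$, the sets $F_N$ form a decreasing sequence of non-empty closed subsets of the compact metric space $X$, so by the finite intersection property $\omega(x)=\bigcap_N F_N\neq\emptyset$; and since $T$ is a homeomorphism, $T(F_N)=F_{N+1}$, whence $T(\omega(x))=\bigcap_N T(F_N)=\bigcap_N F_{N+1}=\omega(x)$.

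Next I would apply Zorn's lemma to the collection of non-empty closed $T$-invariant subsets of $\omega(x)$, partially ordered by reverse inclusion. Any chain in this collection has the intersection of its members as an upper bound, and this intersection is non-empty by compactness, closed, and $T$-invariant; hence there is a maximal element, i.e.\ a non-empty closed $T$-invariant set $M\subset\omega(x)$ containing no proper non-empty closed $T$-invariant subset. Thus $(M,T)$ is minimal. Choosing any $y\in M$, the set $\overline{\text{orb}(y,T)}$ is a non-empty closed $T$-invariant subset of $M$, hence equals $M$, so $(\overline{\text{orb}(y,T)},T)$ is minimal and $y$ is a minimal point of $(X,T)$.

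Finally, since $y\in M\subset\omega(x)$, for every neighborhood $U$ of $y$ and every $N\in\N$ there is $n\ge N$ with $T^{n}x\in U$; using a countable neighborhood basis $\{U_k\}_{k\ge 1}$ of $y$ (available because $X$ is metric) I would inductively pick $n_1<n_2<\cdots$ in $\N$ with $T^{n_i}x\in U_1\cap\cdots\cap U_i$, which gives $T^{n_i}x\to y$. The argument is essentially routine; the one point that requires care — and the only place the statement could fail if handled naively — is that one must work with the forward $\omega$-limit set $\omega(x)$ rather than with the full two-sided orbit closure $\overline{\text{orb}(x,T)}$, so that the approximating indices $n_i$ can be taken in $\N$ rather than merely in $\Z$.
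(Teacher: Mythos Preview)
Your proof is correct and follows essentially the same route as the paper's: apply Zorn's lemma inside a forward-defined invariant set to obtain a minimal subsystem, pick $y$ there, and read off the approximating sequence in $\N$. The only cosmetic difference is that the paper works in the forward orbit closure $Y=\overline{\{T^{n}x:n\in\N\}}$ with forward-invariant subsets (and then checks the minimal one is fully $T$-invariant), whereas you work in $\omega(x)$ with fully $T$-invariant subsets, which saves that small extra step.
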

%\begin{proof}
%	Select $x$ from $X$ arbitrarily and fix it. Let $Y=\overline{\{T^{n}x:n\in\N\}}$. By Zorn's lemma, there is a non-empty closed subset $A$ of $Y$ such that the following hold:
%	\begin{itemize}
%		\item For any $n\in \N$, $T^{n}A\subset A$;
%		\item If there exists a closed subset $B$ of $A$ with the property that for any $n\in \N$, $T^{n}B\subset B$, then $B=\varnothing$ or $A$.
%	\end{itemize}
%	Note that for any $n\in \Z$, $T(T^{n}A)=T^{n}(TA)\subset T^{n}A$ and $T^{n}A\cap A\neq \varnothing$. Then for any $n\in\Z$, $T^{n}A=A$. So, $(A,T)$ is minimal. Take $y$ from $A$. Then $y$ is a minimal point of $(X,T)$. As $A\subset Y$, there exist a sequence $\{n_i\}_{i\ge 1}\subset \N$ such that $$T^{n_i}x\to y$$ as $i\to\infty$. This finishes the proof.
%\end{proof}

Let  $(X,T)$ be a topological dynamical system. By Krylov-Bogolioubov Theorem, we have that the following two sets $$\mathcal{M}(X,T):=\{\mu:\mu\ \text{is}\ \text{a}\ \text{Borel}\ \text{probability}\ \text{measure}\ \text{on}\ \text{X}\ \text{and}\ \text{it}\ \text{is}\ \text{invariant}\ \text{under}\ T\}$$ and $\mathcal{M}^{e}(X,T):=\{\mu:\mu\in \mathcal{M}(X,T)\ \text{and}\ \text{it}\ \text{is}\ \text{ergodic}\ \text{under}\ T\}$ are non-empty.
Let $\mu\in \mathcal{M}^{e}(X,T)$ and $y\in X$. We say that $y$ is a \textbf{generic point} of $\mu$ if for any $f\in C(X)$, we have that  $$\lim_{N\to\infty}\frac{1}{N}\sum_{n=1}^{N}f(T^{n}y)=\int_{X}fd\mu.$$ By Birkhoff's ergodic theorem, $$\mu(\{y\in X:y\ \text{is}\ \text{a}\ \text{generic}\ \text{point}\ \text{of}\ \mu\})=1.$$
\subsection{Symbolic systems}
Let $\Sigma$ be a finite alphabet with $m$ symbols, $m \ge 2$. Let $\Omega=\Sigma^{\Z}$ be the
set of all sequences $x=\cdots x(-1)x(0)x(1) \cdots=(x(i))_{i\in \Z}$, $x(i) \in
\Sigma$, $i \in \Z$. 
Let $\Sigma$ be equipped with the discrete topology and let $\Omega$ be equipped with the product topology. A compatible metric $d$ on $\Omega$ can be
given by $$d(x,y)=\frac{1}{1+k},\ \text{where}\ k=\min \{|n|:x(n) \not= y(n)
\},x,y \in \Omega.$$ The shift map $T: \Omega \longrightarrow \Omega$
is defined by $(T x)(n) = x(n+1)$ for all $x\in \Omega,n \in \Z$. The
pair $(\Omega,T)$ is called a \textbf{two-sided shift}.
%Similarly, we can replace $\Z$ by $\{0,1,2,\ldots\}$ to define \textbf{one-sided shift}, where $T$ is not a
%homeomorphism but a surjective map.

\section{Proof of Theorem \ref{T1}}\label{SA}
In this section, we prove Theorem \ref{T1} via some methods from ergodic theory and combinatorics.

First, we introduce some notation.
Let $X=\{1,2\}^{\Z}$ and $(X,T)$ be a two-sided shift. Each element $x$ of $X$ can be written as $(x(n))_{n\in \Z}$. For any $2$-coloring of $\N$, there exists an $x\in X$ such that the map $\N\to \{1,2\},n\mapsto x(n)$ is equal to the coloring.
Fix $a,b\in\N$ with $a<b$. Let $\tau=T\times T,\sigma=T^{a}\times T^{b}$. Let $U=(E_1\times E_1)\cup (E_2\times E_2)$, where $E_1=\{x\in X:x(0)=1\}\ \text{and}\ E_2=\{x\in X:x(0)=2\}$. For any $(y,z)\in X\times X$, let $N_{\sigma}(U,(y,z))=\{n\in \N: \sigma^{n}(y,z)\in U\}$. 
%It is easy to see that if $n\in N_{\sigma}(U,(y,z))$, then $y(an)=z(bn)$.

By the pigeonhole principle, we have that Theorem \ref{T1} is equivalent to the following.
\begin{thm}\label{T3-1}
	For any $r\in \N$ and any $x\in X$, there exist two sets $B,C\subset \N$ with $|B|=r,|C|=\infty$  such that for any $h\in B,k\in C$, $\tau^{h}\sigma^{k}(x,x)\in U$.
\end{thm}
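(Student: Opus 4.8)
The plan is to reformulate the problem measure-theoretically and then exploit the Kronecker factor of the shift together with Bergelson's finite intersection theorem. Fix $x\in X$ and $r\in\N$. By Lemma \ref{lem4}, there is a sequence $\{n_i\}$ and a minimal point $y$ of $(X,T)$ with $T^{n_i}x\to y$; by a standard diagonal/syndeticity argument (Lemma \ref{lem5}) it suffices to find, for a fixed minimal point, a set $B$ of size $r$ and an infinite $C$ with $\tau^h\sigma^k(y,y)\in U$ for all $h\in B,k\in C$, and then pull the structure back along the $T^{n_i}$ to recover it for $x$ — this is how the combinatorial statement about an arbitrary point reduces to one about a point lying in a minimal set. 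So we may assume $\overline{\mathrm{orb}(x,T)}$ is minimal and pick an ergodic invariant measure $\mu\in\mathcal M^e(X,T)$ on it with $x$ a generic point.

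Next I would pass to the two-fold self-joining. The set $U=(E_1\times E_1)\cup(E_2\times E_2)$ is the ``diagonal-coordinate-agree'' set, and $\mu\times\mu(U)=\mu(E_1)^2+\mu(E_2)^2\ge 1/2>0$. The key observation is that $U$ has positive measure for \emph{every} ergodic component of every $\sigma$-invariant (indeed $\tau$-invariant) joining we will build, because $1_U$ decomposes through the Kronecker factor: writing $1_{E_i}=\mathbb E_\mu(1_{E_i}|\mathcal K)+\psi_i$ with $\psi_i$ orthogonal to all eigenfunctions, Corollary \ref{orthogonal with eigenfunctions} kills the off-Kronecker contribution to $\mathbb E_{\mu\times\mu}(1_{E_i}\otimes 1_{E_i}|\mathcal I)$, so the relevant averages along $\sigma=T^a\times T^b$ are governed entirely by the compact group rotation picture, where $T^a$ and $T^b$ share the same Kronecker factor (Lemma \ref{lem6}), and one gets a uniform positive lower bound $\inf_n (\text{something})>0$ for the density of times $n$ with $\sigma^n$ landing in a set of positive measure.

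Then apply Bergelson's finite intersection theorem (Theorem \ref{CT2}) to the events $E_n=\sigma^{-n}U$ (or rather to the conditional measures along the orbit of $(x,x)$): since $\inf_n \mu\times\mu(\sigma^{-n}U)>0$, there is a set $C_0=\{i_1,i_2,\ldots\}$ of positive upper Banach density such that $\mu\times\mu(\bigcap_{j\le m}\sigma^{-i_j}U)>0$ for every $m$; taking the intersection with the orbit closure data and using genericity of $(x,x)$ (Birkhoff), the set $\{n:\sigma^n(x,x)\in U\}$ contains a set of positive upper Banach density, and in fact the joint returns $\bigcap_j \sigma^{-i_j}U$ having positive measure means by minimality/genericity that the common return set $\{h:\tau^h(\sigma^{i_1}(x,x)),\ldots,\tau^h(\sigma^{i_m}(x,x))\text{ all in }U\}$ is syndetic, hence nonempty for all $m$. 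Finally, feed the positive upper Banach density set $C_0$ into Gowers' effective Szemer\'edi theorem (Theorem \ref{CT1}) to extract arbitrarily long arithmetic progressions inside $C_0$; an arithmetic progression $\{b_0,b_0+d,\ldots,b_0+(r-1)d\}$ inside $C_0$, combined with the infinitely many ``good'' $\tau$-translation times, is exactly what produces $B$ (of size $r$, coming from the progression structure) and $C$ (infinite, coming from iterating the syndetic return times), because $\tau^h\sigma^k$ applied to $(x,x)$ with $h$ and $k$ suitably chosen from these two nested structures stays in $U$ — here one uses that $\tau^{h}\sigma^{k}=T^{h+ak}\times T^{h+bk}$, so controlling the two coordinates $h+ak$ and $h+bk$ simultaneously along an arithmetic progression in $k$ is what the AP buys us.

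The main obstacle I expect is making the last step precise: coordinating the ``size $r$ from an arithmetic progression'' with the ``infinite set from Bergelson/syndetic returns'' so that \emph{all} products $\tau^h\sigma^k(x,x)$ for $h\in B$, $k\in C$ land in $U$ simultaneously, rather than just for individual pairs. This requires that the positive-measure intersection events be chosen \emph{before} extracting the AP and that the AP be found inside a set which is itself a set of simultaneous return times; the interplay between the Kronecker-factor uniform lower bound (giving the density needed for Gowers) and the finite-intersection property (giving the infinite set with all finite sub-intersections nonempty) is the technical heart, and getting the quantifiers in the right order — density bound uniform in $n$ $\Rightarrow$ Bergelson $\Rightarrow$ positive Banach density $\Rightarrow$ Gowers $\Rightarrow$ length-$r$ AP, all while tracking a generic point of a fixed ergodic joining — is where the real work lies.
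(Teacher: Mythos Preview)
Your outline misses a genuine case that your ergodic/Kronecker argument cannot reach. The paper does not reduce to a $T$-minimal point and work with $\mu\times\mu$; it passes to a $\sigma$-minimal point $(y,z)\in\overline{\mathrm{orb}((x,x),\sigma)}$ and then splits, via Lemma~\ref{lem1}, according to whether the minimal $\sigma$-subsystem $Y=\overline{\mathrm{orb}((y,z),\sigma)}$ meets $U$. When $Y\cap U=\varnothing$ your plan collapses: no $\sigma$-invariant measure supported on $Y$ gives $U$ positive mass, and since the $\sigma$-orbit of $(x,x)$ is attracted to $Y$, there is no uniform positive lower bound of the sort you postulate. (Your claim that $U$ has positive measure in ``every ergodic component'' is simply false for joinings other than the product: take $(y,z)$ with $y(an)=1$ and $z(bn)=2$ for all $n$.) Lemma~\ref{lem1} shows this bad case forces a rigid periodic pattern in $(y,z)$, and the paper then handles it by the long, purely combinatorial Proposition~\ref{pro1}. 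That is where Gowers' theorem actually enters --- not as a post-processing AP-extraction from a positive-density set, but inside a multi-layer case analysis to locate progressions among sparse ``switch'' positions of $x$.

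In the good case $Y\cap U\neq\varnothing$ the paper's argument is indeed ergodic and structurally similar to what you sketch, but the roles of your two tools are reversed. One takes an ergodic $\sigma$-measure $\lambda$ on $Y$ (so $\lambda(U)>0$ by minimality), runs the Kronecker/joining analysis (Lemma~\ref{lem3}, Corollary~\ref{orthogonal with eigenfunctions}, Lemma~\ref{lem6}) to get $\lambda(\tau^{-ban}(E_1\times E_1))\ge\delta$ for all $n$ in a positive-density set $B_1$, and then Bergelson's Theorem~\ref{CT2} produces the finite set $B$ of size $r$; the infinite set $C$ comes from genericity of a point for $\lambda$ and then pulling back along $\sigma^{n_i}(x,x)\to(y,z)$. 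Gowers plays no role in this case. Your reduction step is also off: $(x,x)$ is in general not generic for $\mu\times\mu$ under $\sigma$, which is precisely why one must work with the $\sigma$-dynamics on $X\times X$ and face the dichotomy of Lemma~\ref{lem1}.
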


Now, for the topological dynamical system $(X\times X,\sigma)$, we provide a lemma, which classifies all minimal points into two classes.
\begin{lemma}\label{lem1}
	Let $(y,z)$ be a minimal point of the topological dynamical system $(X\times X,\sigma)$. Then $(y,z)$ must satisfy one of the following:
	\begin{itemize}
		\item There exists $d\in \N$ such that for any $k\in \N$,
		$$y(d)\neq z(d),y(d)=y(d+a(b-a)k),\ \text{and}\ z(d)=z(d+b(b-a)k).$$
		\item For any $m\in \N$, either $$N_{\sigma}(U,\tau^{m}(y,z))\ \text{or}\ N_{\sigma}(U,\tau^{m+ab}(y,z))$$ is syndetic.
	\end{itemize}
\end{lemma}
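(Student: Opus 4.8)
The plan is to fix a minimal point $(y,z)$ of $(X\times X,\sigma)$ and split into cases according to whether the orbit ``avoids'' the diagonal-coloring set $U$ in a structured way. First I would observe that $N_\sigma(U,(y,z))=\varnothing$ is the easy extreme: if $\sigma^n(y,z)\notin U$ for all $n\in\N$, then $y(an)\neq z(bn)$ for every $n$, and I would try to leverage minimality to upgrade this to a genuine periodicity statement. Concretely, since $(y,z)$ is a minimal point, by Lemma \ref{lem5} the set of return times to the clopen neighborhood $\{(y',z'):y'(0)=y(0),\,z'(0)=z(0)\}$ is syndetic, hence has bounded gaps, and I would run a pigeonhole / van der Corput-type argument on these return times to produce a fixed $d\in\N$ and a common period of the form $(b-a)k$ (scaled by $a$ on the $y$-coordinate and by $b$ on the $z$-coordinate, which is exactly the effect of iterating $\sigma=T^a\times T^b$) witnessing $y(d)\neq z(d)$ while $y$ is constant along $d+a(b-a)\N$ and $z$ is constant along $d+b(b-a)\N$; this is the first bullet.

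The more interesting case is when $N_\sigma(U,(y,z))\neq\varnothing$. Here I would fix some $n_0$ with $\sigma^{n_0}(y,z)\in U$, and by minimality the point $(y,z)$ returns to $U$ syndetically, so $N_\sigma(U,(y,z))$ is syndetic. The goal is then to propagate this to $N_\sigma(U,\tau^m(y,z))$ for the relevant $m$'s. The key computation is the commutation relation between $\tau=T\times T$ and $\sigma=T^a\times T^b$: one has $\sigma^n\tau^m=T^{an+m}\times T^{bn+m}$. So $\sigma^n\tau^m(y,z)\in U$ iff $y(an+m)=z(bn+m)$, and the difference of indices is $(b-a)n$, independent of $m$; shifting $m$ by $ab$ shifts both indices by $ab$, i.e. the index on the $y$-side moves by a multiple of $b$ and on the $z$-side by a multiple of $a$ relative to the $\sigma$-grid. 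The plan is to show that if $N_\sigma(U,\tau^m(y,z))$ fails to be syndetic for some $m$, then minimality forces $y$ and $z$ to be periodic along the relevant arithmetic progressions, and one can then ``realign'' via the shift by $ab$ so that $N_\sigma(U,\tau^{m+ab}(y,z))$ becomes syndetic — this is the second bullet. I would likely argue by contradiction: if both $N_\sigma(U,\tau^m(y,z))$ and $N_\sigma(U,\tau^{m+ab}(y,z))$ are non-syndetic, then along a syndetic set of $n$ one has $y(an+m)\neq z(bn+m)$ and along another such set $y(an+m+ab)\neq z(bn+m+ab)$, and combining these with the syndeticity of return times of the minimal point $(y,z)$ to small neighborhoods should over-determine the values of $y$ and $z$, contradicting the existence of some return to $U$.

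The main obstacle I anticipate is the realignment step in the second case: making precise how non-syndeticity of one return-time set, together with minimality, forces enough rigidity (periodicity) on $y$ and $z$ that the $ab$-shifted set must be syndetic. This will require carefully tracking which residue classes mod $a$, mod $b$, and mod $b-a$ the indices $an+m$ and $bn+m$ occupy, and exploiting that $\gcd$-type relations among $a$, $b$, $b-a$, $ab$ let one cover all obstructions by at most two shifts $m$ and $m+ab$. A secondary technical point is ensuring all the ``syndetic'' conclusions are genuinely uniform, so that later (in the proof of Theorem \ref{T3-1}) these syndetic sets can be intersected finitely often — but since syndeticity is preserved under the operations needed and Lemma \ref{lem5} is an exact characterization, I expect this to be routine once the case division is set up correctly. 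I would also keep in mind Lemma \ref{lem4}, which guarantees such minimal points $(y,z)$ are approached along the orbit of $(x,x)$, so that the dichotomy here can be transported back to the original point $x$ in the subsequent argument.
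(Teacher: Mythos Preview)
Your plan has two concrete gaps that cause it to miss the short route the paper takes. First, you never observe that $\tau$ commutes with $\sigma$, so $\tau^m(y,z)$ is again a $\sigma$-minimal point for every $m$; by Lemma~\ref{lem5} applied to the clopen set $U$, each $N_\sigma(U,\tau^m(y,z))$ is therefore either syndetic or \emph{empty}. This collapses your vague ``non-syndetic'' hypothesis (``along a syndetic set of $n$ one has $y(an+m)\neq z(bn+m)$'') into the much stronger statement that the inequality holds for \emph{all} $n\in\N$. Your proposed case division on whether $N_\sigma(U,(y,z))$ is empty is then unnecessary: the right move is simply to assume the second bullet fails at some $m$, which by the above means both $N_\sigma(U,\tau^m(y,z))$ and $N_\sigma(U,\tau^{m+ab}(y,z))$ are empty, and derive the first bullet directly.

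Second, you never use the crucial two-color trick: since the alphabet is $\{1,2\}$, if $p\neq q$ and $p\neq r$ then $q=r$. From the two emptiness conditions one has $y(m+an)\neq z(m+bn)$ and $y(m+ab+an)\neq z(m+ab+bn)$ for all $n$; substituting $n\mapsto n+b$ in the first and comparing with the second (same $y$-index $m+ab+an$, two different $z$-indices) forces $z(m+ab+bn)=z(m+b^2+bn)$, i.e.\ $z$ is $b(b-a)$-periodic along the progression $d+b(b-a)\N$ with $d=m+ab$. The symmetric substitution $n\mapsto n+a$ gives the $a(b-a)$-periodicity of $y$, and $y(d)\neq z(d)$ falls out. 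No pigeonhole, van der Corput, return-time bookkeeping, or $\gcd$ tracking is needed; the whole argument is a three-line algebraic manipulation once the ``syndetic or empty'' dichotomy is in place.
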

\begin{proof}
	Note that for any $l\in \N$, $\tau^{l}(y,z)$ is still a minimal point of the topological dynamical system $(X\times X,\sigma)$. So, by Lemma \ref{lem5}, we know that for any $l\in \N$, $N_{\sigma}(U,\tau^{l}(y,z))$ is either syndetic or empty.
	
	Assume that there exists an $m\in \N$ such that $$N_{\sigma}(U,\tau^{m}(y,z))\ \text{and}\ N_{\sigma}(U,\tau^{m+ab}(y,z))$$ are empty. As $U=(E_1\times E_1)\cup (E_2\times E_2)$, for any $n\in\N$, we have that
	\begin{equation}\label{eq5}
		y(m+an)\neq z(m+bn)\ \text{and}\ y(m+ab+an)\neq z(m+ab+bn).
	\end{equation}
	By replacing $n$ by $n+a$ and $n+b$ in \eqref{eq5}, we know that for any $n\in \N\cup \{0\}$, $$y(m+ab+an)\neq z(m+bn+b^2)\ \text{and}\ y(m+an+a^2)\neq z(m+ab+bn).$$
	Note that for any $x\in X,t\in{\N}$, $x(t)\in\{1,2\}$. So, for any $n\in \N\cup \{0\}$, we know that $$z(m+ab+bn)=z(m+bn+b^2)=z(m+ab+b(n+b-a))\ \text{and}$$ $$y(m+ab+an)=y(m+an+a^2)=y(m+ab+a(n-(b-a))).$$
	Let $d=m+ab$ and take $n$ from $\{0,b-a,2(b-a),\ldots\}$. Then for any $k\in \N$,
	$$y(d)=y(d+a(b-a)k)\ \text{and}\ z(d)=z(d+b(b-a)k).$$ Note that $$y(d)\neq z(m+b^2)\ \text{and}\ z(m+b^2)=z(m+ab+b(b-a))=z(d).$$ Therefore, $y(d)\neq z(d)$. The proof is complete.
\end{proof}
%\begin{rem}
	By Lemma \ref{lem1}, the proof of Theorem \ref{T3-1} is divided into two cases. For the first case, we give an elementary proof by Theorem \ref{CT1}. For another case, we use some methods from ergodic theory to deal with it.
%\end{rem}
\subsection{}
In this subsection, we consider the first case of Theorem \ref{T3-1}. Before stating specific result, we introduce a simple fact.
\begin{lemma}\label{lem7}
	Given $k\in \N$, let $Y=\{1,\ldots,k\}^{\Z}$. Then for any non-empty finite subset $F$ of $\Z$ and any non-empty subset $A$ of $Y$, we have that
	$$|\{(y(i))_{i\in F}\in \{1,\ldots,k\}^{F}:y\in A\}|\le k^{|F|}.$$
\end{lemma}

For each $x\in X$, we use $\text{Min}((x,x),\sigma)$ to denote all minimal points of the topological dynamical system $(\overline{\text{orb}((x,x),\sigma)},\sigma)$. 
%Clearly, the minimal points of $(\overline{\text{orb}((x,x),\sigma)},\sigma)$ are ones of $(X\times X,\sigma)$.
\begin{prop}\label{pro1}
	Let $x\in X$. If there is $(y,z)\in \text{Min}((x,x),\sigma)$ such that the following hold:
	\begin{itemize}
		\item There exists $\tilde{d}\in \N$ such that for any $\tilde{k}\in \N$,
		\begin{equation}\label{value of (y,z)}y(\tilde{d})\neq z(\tilde{d}),y(\tilde{d})=y(\tilde{d}+a(b-a)\tilde{k}),\ \text{and}\ z(\tilde{d})=z(\tilde{d}+b(b-a)\tilde{k}).\end{equation}
		\item There exists a strictly increasing sequence $\{n_k\}_{k\ge 1}\subset \N$ such that $$\sigma^{n_k}(x,x)\to (y,z)$$ as $k\to\infty$.
	\end{itemize}
	
	 Then there exist two sets $B,C\subset \N$ with $|B|=r,|C|=\infty$ and $m\in \{1,2\}$ such that
\begin{equation}\label{main goal}
\{n\in \N:x(n)=m\}\supset \big((B+aC)\cup (B+bC)\big).\end{equation}
\end{prop}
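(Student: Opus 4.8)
The plan is to exploit the structure of the limiting minimal point $(y,z)$ to locate, inside the coloring $x$ itself, a long arithmetic progression on which $x$ is constant, and then to bootstrap this into the full structure $B+aC,B+bC$ by a pigeonhole-and-Gowers argument. First I would isolate the monochromatic arithmetic progression coming from $(y,z)$: by \eqref{value of (y,z)} there is a color $m\in\{1,2\}$ (say $m=y(\tilde d)$) and a residue/step pattern so that $y(\tilde d+a(b-a)\tilde k)=m$ for all $\tilde k$, while $z(\tilde d+b(b-a)\tilde k)=m'\neq m$ for all $\tilde k$. Since $\sigma^{n_k}(x,x)=(T^{an_k}x,T^{bn_k}x)\to(y,z)$, for any prescribed finite window $F\subset\Z$ and all large $k$ we have $x(an_k+j)=y(j)$ and $x(bn_k+j)=z(j)$ for $j\in F$. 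Taking $F$ to contain $\{\tilde d+a(b-a)\tilde k:0\le\tilde k\le L\}\cup\{\tilde d+b(b-a)\tilde k:0\le\tilde k\le L\}$ for a large $L$, this transfers the pattern back to $x$: there are arbitrarily long arithmetic progressions (with common differences $a(b-a)$ and $b(b-a)$) on which $x$ is constantly $m$, located at $an_k+\tilde d$ and $bn_k+\tilde d$ respectively.

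Next I would set up the combinatorial core. The goal \eqref{main goal} asks for a set $B$ of size $r$ and an infinite set $C$ with $x\equiv m$ on $B+aC$ and on $B+bC$ simultaneously. The natural idea is to build $C$ greedily: having chosen $c_1<\dots<c_s\in C$ and a candidate $B$, I extend $C$ by an element $c_{s+1}$ drawn from the progression structure above, using that for suitable indices $k$ both $\{an_k+\tilde d+a(b-a)\tilde k\}$ and $\{bn_k+\tilde d+b(b-a)\tilde k\}$ sit inside the $m$-colored set. Writing $an_k+\tilde d+a(b-a)\tilde k=a(n_k+(b-a)\tilde k)+\tilde d$ and $bn_k+\tilde d+b(b-a)\tilde k=b(n_k+(b-a)\tilde k)+\tilde d$, we see the single choice $c=n_k+(b-a)\tilde k$ feeds the $a$-copy and the $b$-copy at once, with the common shift $\tilde d$ playing the role of (part of) $B$. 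So $\{\tilde d\}$ is a length-$1$ base; to get $|B|=r$ one iterates, using Gowers' quantitative Szemer\'edi theorem (Theorem \ref{CT1}) inside the long monochromatic progressions to find, for each candidate infinite $C$ built so far, a length-$r$ arithmetic progression $B$ of shifts such that $B+aC$ and $B+bC$ remain monochromatic; Lemma \ref{lem7} is used to control how many coordinate-patterns can occur, keeping the relevant density above the Gowers threshold so a length-$r$ progression is guaranteed. I would organize this as: fix $m$; produce a nested sequence of finite pieces $C_1\subset C_2\subset\cdots$ together with common refinements $B$, alternating ``extend $C$'' and ``re-select $B$ via Gowers'' steps, and take $C=\bigcup_s C_s$.

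The main obstacle I anticipate is the simultaneous compatibility of $B$ with \emph{all} of $C$: choosing $B$ good for finitely many $c\in C$ is easy, but $B$ must be fixed once and for all while $C$ is infinite, so one cannot simply re-pick $B$ at each stage. The fix is to reverse the order of quantifiers — first pin down $B$ as a length-$r$ progression $\{\tilde d,\tilde d+g,\dots,\tilde d+(r-1)g\}$ with $g$ a multiple of the periods $a(b-a)$ and $b(b-a)$ forced by \eqref{value of (y,z)}, obtained by applying Gowers inside one sufficiently long monochromatic block of $x$; the periodicity in \eqref{value of (y,z)} guarantees that \emph{every} element of such a $B$ repeats the color $m$ along the $a(b-a)$- and $b(b-a)$-progressions, so that $B+aC$ and $B+bC$ land in the $m$-set for $C$ drawn from the corresponding arithmetic progression of $c$'s. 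Then $C$ is extracted as an infinite arithmetic-progression-like subset of $\{n_k+(b-a)\tilde k\}$, which is automatic. A secondary technical point is ensuring the convergence $\sigma^{n_k}(x,x)\to(y,z)$ is uniform enough on the (growing) windows $F$; this is handled by a standard diagonalization, passing to a subsequence of $\{n_k\}$ at each stage. Once these two points are arranged, \eqref{main goal} follows by assembling the pieces, with $x\equiv m$ on $B+aC\cup B+bC$ by construction.
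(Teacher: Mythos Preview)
There is a genuine gap, and it sits at the very first step. The hypothesis $y(\tilde d)\neq z(\tilde d)$ is not a decoration: it means the two periodic progressions you extract carry \emph{opposite} colors. Concretely, with $c=n_k+(b-a)\tilde k$ your own computation gives
\[
x(\tilde d+ac)=x\bigl(an_k+\tilde d+a(b-a)\tilde k\bigr)\approx y\bigl(\tilde d+a(b-a)\tilde k\bigr)=m,
\]
but
\[
x(\tilde d+bc)=x\bigl(bn_k+\tilde d+b(b-a)\tilde k\bigr)\approx z\bigl(\tilde d+b(b-a)\tilde k\bigr)=m'\neq m,
\]
so the pair $(\tilde d+ac,\tilde d+bc)$ lands in \emph{different} color classes, not the same one. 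Your sentence ``arithmetic progressions \dots\ on which $x$ is constantly $m$, located at $an_k+\tilde d$ and $bn_k+\tilde d$'' is therefore false for the second location, and the proposed ``fix'' (choosing $g$ a common multiple of $a(b-a)$ and $b(b-a)$ and invoking periodicity) fails for exactly the same reason: periodicity of $y$ forces $B+aC$ into color $m$ while periodicity of $z$ forces $B+bC$ into color $m'$. Nothing in your outline bridges this mismatch; Gowers and pigeonhole cannot help if the two target sets are monochromatic of different colors.

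The paper's argument is built precisely around this obstruction. Transferring \eqref{value of (y,z)} to $x$ yields, for a well-chosen sequence $d_k$, long blocks of $1$'s along $ad_k+a(b-a)\{1,\dots,ak\}$ and long blocks of $2$'s along $bd_k+b(b-a)\{1,\dots,4bk\}$. Since the colors disagree, somewhere in the interval between $ad_k$ and $bd_k$ (along the $a(b-a)$-progression) there must be a transition from $1$ to $2$; the proof introduces a function $\phi(k)$ measuring the longest run of $2$'s in that window and splits into the cases $\phi$ bounded versus unbounded, with further sub-cases governed by an auxiliary function $\Lambda$. In each branch one manufactures, inside the transition region, a dense set of positions of a \emph{single} color to which Gowers' Theorem~\ref{CT1} is applied; the progression it returns, together with an infinite family of indices $d_{k_i}$ (thinned by pigeonhole via Lemma~\ref{lem7}), is then assembled algebraically into explicit $B$ and $C$. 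The final color $m$ is not fixed in advance: it is $1$ in some branches and $2$ in others. None of this structure is visible from the limiting point $(y,z)$ alone; it lives in the gap between $ad_k$ and $bd_k$ in $x$.
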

\begin{rem}
	Although at this case, the dynamical behavior of the coloring $x$ is simple, to search for specific $B$ and $C$, we have to classify all possible structures of $x$. This causes a tedious and complicated proof.
\end{rem}
\begin{proof}
	Let $\tilde{x}=T^{\tilde{d}}x$. Without loss of generality, we assume that $y(\tilde{d})=1,z(\tilde{d})=2$. Note that there exists a strictly increasing sequence $\{n_i\}_{i\in \N}$ such that $(T^{an_i}\tilde{x},T^{bn_i}\tilde{x})\rightarrow (T^{\tilde{d}}y,T^{\tilde{d}}z)$.
By \eqref{value of (y,z)} and choosing a subsequence of $\{n_i\}_{i\in \N}$,  we can find a strictly increasing seuqence $\{d_k\}_{k\ge 1}\subset \N$\footnote{Thoughout this proof, all choices of all parameters are relatively rough. But it is enough to search for targeted structures.} such that the following hold:
	\begin{itemize}
		\item For each $k\in \N$, $bd_{k}-ad_{k}-3(ab)^{2}(b-a)k>0$. And $bd_{k}-ad_{k}-3(ab)^{2}(b-a)k\to \infty$ as $k\to\infty$.
		\item For each $k\in \N$, $ad_{k+1}-bd_{k}-6(ab)^{2}(b-a)k>0$. And $ad_{k+1}-bd_{k}-6(ab)^{2}(b-a)k\to\infty$ as $k\to\infty$.
		\item For each $k\in \N,1\le i\le ak,1\le j\le 4bk$, $\tilde{x}(ad_{k}+a(b-a)i)=1$ and $\tilde{x}(bd_{k}+b(b-a)j)=2$.
          \item $(\log_{k}d_k)/k\to \infty$ as $k\to\infty$.
	\end{itemize}
	We define an arithmetic function $\phi:\N\to \N\cup \{0\} $ by the following:
	
	If there exists $ad_{k}+2ab(b-a)k<u<bd_k-a(b-a)$ such that $a(b-a)|(u-ad_{k})$ and $\tilde{x}(u+a(b-a))=2$, then
	\begin{align*}
	& k\mapsto\max\{M\in \N:\text{there exists}\ ad_{k}+2ab(b-a)k< u<bd_{k}\ \text{such that}\ a(b-a)|(u-ad_{k}),
	\\ & \hspace{2cm} \tilde{x}(u+a(b-a))=\cdots=\tilde{x}(u+Ma(b-a))=2\ \text{and}\ u+Ma(b-a)<bd_k\};
	\end{align*} otherwise, $k\mapsto 0$.
	
	Now, the rest of the proof is divided into two steps based on the property of $\phi$.
	
	\noindent \textbf{Step I. $\phi$ is bounded.}
	
	There exists $M_0\in \N$ such that for any $k\in \N$, we have $\phi(k)\le M_0$. By the definition of $\phi$, for any sufficiently large $k$ and any  $ad_k+2ab(b-a)k<u<bd_k-(M_0+1)a(b-a)$ with $a(b-a)|(u-ad_k) $, there is $1\leq n\leq M_0+1$ such that $\tilde{x}(u+na(b-a))=1$. For any $k\in \N$, let $m_k=(d_k\ \text{mod}\ a)$. Then for any $l\in \N$ and any sufficiently large $k$, by going through $u=bd_k-m_k(b-a)-n(b-a) $ from $n=1$ to $n=l(M_0+1)$, one can find at least $l$ elements
\begin{equation}\label{u sequence}\gamma_{1,k}(b-a)<\cdots<\gamma_{l,k}(b-a)\ \text{from}\ \{a(b-a),2a(b-a),\ldots,L(M_0+1)a(b-a)\}
\end{equation}
such that
\begin{equation}\label{x value 1}\tilde{x}(bd_k-m_k(b-a)-\gamma_{1,k}(b-a))=\cdots=\tilde{x}(bd_k-m_k(b-a)-\gamma_{l,k}(b-a))=1.
\end{equation}
 Choose $L$ from $\N$ such that
	$$\frac{1}{2a(M_0+1)}> (\log \log 2aL (M_0+1))^{-2^{-2^{r+9}}}$$ and fix it.
	As $L$ and $M_0$ are finite, by Lemma \ref{lem7} and pigeonhole principle, there exist a strictly increasing sequence $\{h_i\}_{i\geq 1}\subset \N$ and $m,\gamma_1,\ldots,\gamma_L$ such that $h_1>8arLM_0$ and for each $i\ge 1$, $d_{h_i}>24(ab)^{2}LM_0h_i$, $m=m_{h_i}$ and $\gamma_j=\gamma_{j,h_i},j=1,\ldots,L$.  By the choice of $L$, Theorem \ref{CT1} and \eqref{u sequence}, there exists an arithmetic progression
$$ \{s,s+t,\ldots,s+(r-1)t\}\subset \{m+\gamma_1,\ldots,m+\gamma_L\}\subset \{m+a,\ldots,m+L(M_0+1)a\}.$$
By \eqref{x value 1}, for each $i\ge 1$, we have
$$\tilde{x}(bd_{h_i}-s(b-a))=\cdots=\tilde{x}(bd_{h_i}-(s+(r-1)t)(b-a))=1. $$
Let $$C=\Big\{d_{h_i}-s-(r-1)t-a:i\in\N\Big\}\ \text{and}$$
	$$B=\Big\{\tilde{d}+a\Big(s+(r-1)t+b\Big)+j(b-a)t:0\le j\le r-1\Big\}.$$
By the restriction for $\{h_i\}_{i\ge 1}$, we have $C\subset \N$. Clearly,
$$B+aC-\tilde{d}=\Big\{ad_{h_i}+a(b-a)\Big(1+\frac{jt}{a}\Big):i\in \N  \text{ and }0 \le j\le r-1\Big\}. $$
Note that $a|((s+t)-s)$ i.e. $a|t$. By the construction of $\{d_k\}_{k\ge 1}$, one has that $\tilde{x}(ad_{k}+a(b-a)i)=1$ for each $k\in \N,1\le i\le ak$. By the restriction for $h_1$, $(1+jt/a)<ah_i$ for each $i\ge 1$. Therefore, $x(B+aC)=\tilde{x}(B+aC-\tilde{d})=1$. We also have
$$B+bC-\tilde{d}=\Big\{bd_{h_i}-(s+jt)(b-a): i\in \N  \text{ and } 0\le i\le r-1\Big\}. $$
Thus $x(B+bC)=\tilde{x}(B+bC-\tilde{d})=1$.

	\noindent \textbf{Step II. $\phi$ is unbounded.}
	
	Choose $L_0$ from $\N$ such that $L_0>ab(b-a)100^{r(r+2)}$.
	In this step, we can assume that there is a strictly increasing sequence $\{k_i\}_{i\in \N}\subset \N$ with $k_1>2ar$ such that for each $i\ge 1$, $\phi(k_i)\ge bL_0$. Define $\Lambda:\N\to\N\cup \{0\}$ by
	\begin{align*}
		& i\mapsto\min\{v-ad_{k_i}-2ab(b-a)k_i:\ ad_{k_i}+2ab(b-a)k_i\le v, a(b-a)|(v-ad_{k_i}),
		\\ & \hspace{1cm} \tilde{x}(v+a(b-a))=\cdots=\tilde{x}(v+bL_0a(b-a))=2\ \text{and}\ v+bL_0a(b-a)<bd_{k_i}\}.
	\end{align*}	
	The rest of the step is divided into two parts based on the property of $\Lambda$.
For each $i\ge 1$, let \begin{equation}\label{de of v}v_{i}=ad_{k_i}+2ab(b-a)k_i+\Lambda(i).
 \end{equation}By the definition of $\Lambda$, for each $i\in\N$,  $b(b-a)|(\frac{b}{a}v_{i}-bd_{k_i})$. By the definition of $\Lambda$, we have that for any $i\in \N$,
\begin{equation}\label{x value 2}\tilde{x}(v_i+ab(b-a))=\cdots=\tilde{x}(v_{i}+L_{0}ab(b-a))=2.\end{equation}

\noindent \textbf{Case I. $\Lambda$ is bounded.}

In this case, one has that $\{\frac{b}{a}\Lambda(i):i\in \N\}=\{\frac{b}{a}v_{i}-bd_{k_i}-2b^{2}(b-a)k_i:i\in \N\}$ is bounded. By pigeonhole principle, there exist $E\in \N\cup \{0\}$ with $b(b-a)|E$ and an infinite subset $I_1$ of $\N$ such that for each $i\in I_1$,
$$E=\frac{b}{a}v_{i}-bd_{k_i}-2b^{2}(b-a)k_i\text{ and }\frac{v_{i}}{a}>6abr(E+1).$$
	Let $$B=\Big\{\tilde{d}+a^{2}b(r+1)+\frac{aE}{b-a}+jab(b-a):1\le j\le r\Big\}\ \text{and}$$ $$C=\Big\{\frac{v_{i}}{a}-ab(r+1)-\frac{E}{b-a}:i\in I_1\Big\}.$$
By the restriction for $\{v_i\}_{i\in I_1}$, we have $C\subset \N$. Clearly,
$$B+aC-\tilde{d}=\{v_{i}+jab(b-a):1\leq j\leq r\}.$$
By \eqref{x value 2}, one has that $x(B+aC)=\tilde{x}(B+aC-\tilde{d})=2$.
We also have
$$B+bC-\tilde{d}=\{bd_{k_i}+(2bk_i+ja)b(b-a):i\in I_1,1\leq j\leq r\}.$$ Recall the definition of $\{d_k\}_{k\ge 1}$, one has that for each $i\ge1$, $\tilde{x}(bd_{k_i}+jb(b-a))=2 $ for $1\leq j\leq 4bk_i$. By the restriction for $k_1$, $ja<2bk_i$ for each $i\in I_1$.
So,
$$x(B+bC)=\tilde{x}(B+bC-\tilde{d})=2.$$
	\noindent \textbf{Case II. $\Lambda$ is unbounded.}
	
	 In this case, by the unboundedness of $\Lambda$,  there exists an infinite subset $I_2$ of $\N$ such that for $i\in I_2$, $\Lambda(i)>3aR(b-a)(bL_0+1)$, where $R\in \N$ and  $$\frac{1}{a(b-a)(bL_0+1)}> (\log \log aR(b-a)(bL_0+1))^{-2^{-2^{2ab(b-a)rL_0+10}}}.$$
By the definition of $\Lambda$, for any $i\in I_2$, there are at least $R$ elements $u_{1,i},\ldots,u_{R,i}$ in $$\{a(b-a),\ldots,aR(b-a)(bL_0+1)\}$$ such that
$$\tilde{x}(v_{i}-u_{1,i})=\cdots=\tilde{x}(v_{i}-u_{R,i})=1.$$	
By Lemma \ref{lem7} and prigeonhole principle, there exists an infinte set $I_3\subset I_2$ such that for any $i\in I_3$, $(u_{1,i},\ldots,u_{R,i})=(u_1,\ldots,u_R)$, where $\{u_1,\ldots,u_R\}\subset \{a(b-a),\ldots,aR(b-a)(bL_0+1)\}$. Here, we also choose $I_3$ such that for any $i\in I_3$, $$\frac{b}{a}v_{i}-bd_{k_i}-2b^{2}(b-a){k_i}>6(\eta+r)ab^{2}(b-a)L_0\beta,$$ where  $\eta=100a^2b^4RL_0$.	By the unboundedness of $\Lambda$ and \eqref{de of v}, this is possible. By the choice of $R$ and Theorem \ref{CT1}, there exist an arithmetic progression
$$\{\alpha,\alpha+\beta,\ldots,\alpha+2ab(b-a)rL_0\beta\}\subset \{u_1,\ldots,u_R\}\subset\{a(b-a),\ldots,aR(b-a)(bL_0+1)\}.$$
Thus one has that for any $i\in I_3$,
\begin{equation}\label{alpha value 1}\tilde{x}(v_{i}-\alpha)=\cdots=\tilde{x}(v_{i}-(\alpha+2ab(b-a)rL_0\beta))=1.
\end{equation}
	For each $i\in I_3,\eta< j\le \eta+r$, we define
\begin{equation}\label{de of Sj}
S_j(i)=\{\frac{b}{a}v_{i}-sab(b-a):(j-1)\beta L_0+1\leq s\leq (j-1)\beta L_0+L_0\}.
\end{equation}
	
	The rest of the case is divided into two situations based on the property of the sequence $\{S_{j}(i)\}_{i\in I_3,\eta<j\le \eta+r}$ of finite subsets of $\N$.
	
	\noindent\textbf{Situation I. There exist $\eta< j\le \eta+r$ and an infinite set $I_4\subset I_3$ such that for any $i\in I_4$, $|\{n\in S_j(i):\tilde{x}(n)=2\}|\ge r+2$.}

In this situation, by Lemma \ref{lem7} and pigeonhole principle, there exist an infinite set $I_5\subset I_4$ and a set $\{s_1,\ldots,s_{r}\}\subset\{1,\ldots,L_0-1\}$ such that
\begin{equation}\label{s value 2}\tilde{x}\big(\frac{b}{a}v_i-((j-1)\beta L_0+s_t)ab(b-a)\big)=2\text{ for any}\ i\in I_5,1\leq t\leq r.
\end{equation}

Let	$$B=\{\tilde{d}+((j-1)\beta +1)L_{0}a^{2}b+(L_{0}-s_{t})ab(b-a):1\le t\le r\}\ \text{and}$$ $$C=\Big\{\frac{v_{i}}{a}-((j-1)\beta +1)L_{0}ab:i\in I_5\Big\}.$$
By the restriction for $\{v_i\}_{i\in I_3}$, we have $C\subset \N$. Clearly,
$$B+aC-\tilde{d}=\{v_i+(L_{0}-s_{t})ab(b-a):1\leq t\leq r,i\in I_5\}.$$
By \eqref{x value 2}, one has that $\tilde{x}(v_i+nab(b-a))=2 $ for any $i\in I_5,1\leq n\leq L_0$. Therefore, $x(B+aC)=\tilde{x}(B+aC-\tilde{d})=2$. We also have
$$B+bC-\tilde{d}=\{\frac{b}{a}v_i-((j-1)\beta L_0+s_t)ab(b-a):i\in I_5,1\leq t\leq r\}.$$
By \eqref{s value 2}, one has that $x(B+aC)=x(B+bC)=\tilde{x}(B+bC-\tilde{d})=2$.
	
	\noindent \textbf{Situation II. Reverse side of Situation I.}
	
	In this situation, we know that there exists a co-finite subset $I_6$ of $I_3$ such that for each $i\in I_6,\eta< j\le \eta+r$, $|\{n\in S_j(i):\tilde{x}(n)=2\}|< r+2$. By \eqref{de of Sj},
$$S_j(i)=\frac{b}{a}v_i-(j-1)\beta L_0 ab(b-a)-\{ab(b-a),2ab(b-a),\ldots,L_0ab(b-a)\}. $$
By Lemma \ref{lem7} and pigeonhole principle, there exist an infinite set $I_7\subset I_6$ and a set $$A\subset \{ab(b-a),2ab(b-a),\ldots,L_0ab(b-a)\}$$
 such that for any $i\in I_7, \eta<  j\leq  \eta+r$, we have that the following hold:
 \begin{enumerate}
   \item $v_i>6\eta \beta L_0a^2b^4(b-a)$;
   \item $|A|\geq  L_0-r(r+2)>0$;
   \item for any $n\in A$, $\tilde{x}(\frac{b}{a}v_i-n-(j-1)\beta L_0 ab(b-a))=1$.
 \end{enumerate}
 Let $\xi\in A+\eta \beta L_0 ab(b-a)$. Then one has that for any $i\in I_7$,
\begin{equation}\label{xi value 1}\tilde{x}(\frac{b}{a}v_i-\xi)=\tilde{x}(\frac{b}{a}v_i-(\xi+ab(b-a)\beta  L_0))=\cdots=\tilde{x}(\frac{b}{a}v_i-(\xi+(r-1)ab(b-a)\beta L_0))=1.
\end{equation}
	Let $$B=\Big\{\tilde{d}+\frac{a(\xi-\alpha)}{b-a}-\alpha-jab(b-a)L_{0}\beta:0\le j<r-1\Big\}\ \text{and}$$ $$C=\Big\{\frac{v_{i}}{a}-\frac{\xi-\alpha}{b-a}:i\in I_7\Big\}.$$
By those restrictions for $\{v_i\}_{i\in I_7}$ and $\eta$, we have $\N\supset B,C$. Clearly,
$$B+aC-\tilde{d}=\{v_{i}-\alpha-jab(b-a)L_0\beta:i\in I_7,0\le  j< r\}.$$
By \eqref{alpha value 1}, one has that $x(B+aC)=\tilde{x}(B+aC-\tilde{d})=1$. We also have
$$B+bC-\tilde{d}=\{\frac{b}{a}v_i-\xi-jab(b-a)\beta  L_0:i\in I_7,0\leq j\leq r-1\}.$$
By \eqref{xi value 1}, one has that $x(B+bC)=\tilde{x}(B+bC-\tilde{d})=1$.

The proof is complete.
\end{proof}

\subsection{}
In the subsection, we provide two technique lemmas.
%The following lemma classifies the minimal points of $(X\times X,\sigma)$.
%\begin{lemma}\label{lem1}
%	Let $(y,z)$ be a minimal point of the topological dynamical system $(X\times X,\sigma)$. Then the point $(y,z)$ suffices one of the following:
%	\begin{itemize}
%		\item There exists $d\in \N$ such that for any $k\in \N$,
%		$$y(d)\neq z(d),y(d)=y(d+a(b-a)k),\ \text{and}\ z(d)=z(d+b(b-a)k).$$
%		\item For any $m\in \N$, one of $$N_{\sigma}(U,\tau^{m}(y,z))\ \text{and}\ N_{\sigma}(U,\tau^{m+ab}(y,z))$$ is syndetic.
%	\end{itemize}
%\end{lemma}
%\begin{proof}
%   Note that for any $l\in \N$, $\tau^{l}(y,z)$ is still a minimal point of the topological dynamical system $(X\times X,\sigma)$. So, for any $l\in \N$, $N_{\sigma}(U,\tau^{l}(y,z))$ is syndetic or empty.
%
%   Assume that there exists $m\in \N$ such that $$N_{\sigma}(U,\tau^{m}(y,z))\ \text{and}\ N_{\sigma}(U,\tau^{m+ab}(y,z))$$ are empty. Then for any $n\in\N$, we have $$y(m+an)\neq z(m+bn)\ \text{and}\ y(m+ab+an)\neq z(m+ab+bn).$$ So, for any $n\in \N$, we have $$z(m+ab+bn)=z(m+ab+b(n+b-a))\ \text{and}$$ $$y(m+ab+an)=y(m+ab+a(n+b-a)).$$
%
%   Let $d=m+2ab$. Then for any $k\in \N$,
%   $$y(d)=y(d+a(b-a)k)\ \text{and}\ z(d)=z(d+b(b-a)k).$$ Note that $y(d)\neq z(m+ab+b^2)=z(d)$. So, $$y(d)\neq z(d).$$ The proof is complete.
%\end{proof}
The first one is standard in measure theory. Here, we still provide a proof for completeness.
\begin{lemma}\label{lem2}
	Let $\pi:X\to Y$ be a measure preserving transformation between Lebesgue probability spaces $(X,\X,\mu)$ and $(Y,\Y,\nu)$. Let $D\in \X$ with $\mu(D)>0$. Then for any $\mu(D)>\epsilon>0$, there exists a measurable subset $D_{\ep}\subset D$ such that the following hold:
	\begin{itemize}
		\item For any $x\in D_{\ep}$, $\E_{\mu}(1_{D}|Y)(x)\ge \ep$.
		\item $\mu(D)-\mu(D_{\ep})\le\ep$.
	\end{itemize}
\end{lemma}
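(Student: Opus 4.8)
The plan is to take the obvious candidate: set $g=\E_{\mu}(1_{D}|Y)$, let $A_{\ep}=\{x\in X: g(x)\ge \ep\}$, and define $D_{\ep}=D\cap A_{\ep}$. This is a measurable subset of $D$, and the first bullet is immediate, since every $x\in D_{\ep}\subset A_{\ep}$ satisfies $\E_{\mu}(1_{D}|Y)(x)=g(x)\ge \ep$.

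For the second bullet, I would write $\mu(D)-\mu(D_{\ep})=\mu\big(D\setminus A_{\ep}\big)=\mu\big(D\cap\{g<\ep\}\big)$ and then bound the last quantity. The point is that, because $g=\E_{\mu}(1_{D}|Y)$ is $Y$-measurable, there is $\tilde g\in L^{1}(\nu)$ with $\tilde g\circ\pi=g$ $\mu$-a.e., so that, up to a $\mu$-null set, $\{g<\ep\}=\pi^{-1}(A)$ with $A=\{\tilde g<\ep\}\in\Y$. Applying the defining identity of the conditional expectation (with this $A$ and $f=1_D$) gives
\[
\mu\big(D\cap\{g<\ep\}\big)=\int_{\pi^{-1}A}1_{D}\,d\mu=\int_{\pi^{-1}A}g\,d\mu\le \ep\cdot\mu(\pi^{-1}A)=\ep\cdot\nu(A)\le \ep,
\]
using $0\le g<\ep$ on $\pi^{-1}A$ and $\nu(A)\le 1$. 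Hence $\mu(D)-\mu(D_{\ep})\le\ep$. One also notes in passing that $\mu(D_{\ep})\ge\mu(D)-\ep>0$ since $\ep<\mu(D)$, so $D_{\ep}$ is in particular non-empty.

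There is no genuine obstacle here; the only step requiring a line of care is the identification of the sub-level set $\{g<\ep\}$ with the $\pi$-preimage of a set in $\Y$ (modulo null sets), which is exactly what licenses the use of the averaging identity defining $\E_{\mu}(\,\cdot\,|Y)$. Everything else is a one-line estimate.
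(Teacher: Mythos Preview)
Your proof is correct and uses the same candidate set as the paper: both take $D_{\ep}$ to be $D$ intersected with the super-level set of $g=\E_{\mu}(1_{D}|Y)$ (the paper uses $\{g>\ep\}$, you use $\{g\ge\ep\}$, an immaterial difference). The only divergence is in how the bound $\mu(D\setminus D_{\ep})\le\ep$ is obtained: the paper invokes the disintegration $\mu=\int_Y\mu_y\,d\nu$ and argues by contradiction (if $\mu(D')>\ep$ then some fiber measure $\mu_{\pi(x)}(D')>\ep$, contradicting $\mu_{\pi(x)}(D)\le\ep$ for $x\in D'$), whereas you go directly through the defining averaging identity $\int_{\pi^{-1}A}1_D\,d\mu=\int_{\pi^{-1}A}g\,d\mu$ and the pointwise bound $g<\ep$ on $\pi^{-1}A$. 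Your route is a bit more economical and avoids the disintegration machinery entirely; the paper's route has the mild advantage of making the fiberwise picture explicit, but nothing in the lemma actually requires that.
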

\begin{proof}
	Let $$\mu=\int_{Y}\mu_{y}d\nu(y)$$ be the disintegration of $\mu$ with respect to $Y$. Then for $\mu$-a.e. $x\in X$, $\E_{\mu}(1_D|Y)(x)=\mu_{\pi(x)}(D)$. Let $D'=\{x\in X:\E_{\mu}(1_D|Y)(x)\le \ep\}\cap D$.

	If $\mu(D')>\ep$, then there exists a measurable subset $D''$ with $\mu$-positive measure of $D'$ such that for $\mu$-a.e. $x\in D''$, $\mu_{\pi(x)}(D')> \ep$. It is a contradition. So, $\mu(D')\le\ep$. Let $D_{\ep}=D\backslash {D'}$. This finishes the proof.
\end{proof}
The following lemma constructs a joining.
\begin{lemma}\label{lem3}
	Let $(X_1,\B_1,\mu_1,T_1)$ and $(X_2,\B_2,\mu_2,T_2)$ be two ergodic measure preserving systems. Let $\lambda$ be an ergodic joining of $(X_1,\B_1,\mu_1,T_1)$ and $(X_2,\B_2,\mu_2,T_2)$. Let $(Z,\B(Z),m,R)$ be the Kronecker factor of $(X_1,\B_1,\mu_1,T_1)$ and $\pi$ be the related factor map from $X_1$ to $Z$. Then there exists $\hat{\lambda}\in \mathcal{M}(Z\times X_1\times X_2,\B(Z)\otimes B_1\otimes B_2)$ such that the following hold:
	\begin{itemize}
		\item $\hat{\lambda}$ is a joining of $(X_1\times X_2,\B_1\otimes \B_2,\lambda,T_1\times T_2)$ and $(Z,\B(Z),m,R)$;
		\item Let $\hat{\mu}_1$ be the projections of $\hat{\lambda}$ with respect to $Z\times X_1$. Then $(X_1,\B_1,\mu_1,T_1)$ and $(Z\times X_1,\B(Z)\otimes \B_1,\hat{\mu_1},R\times T_1)$ are isomorphic by the map $X_1\to Z\times X_1,x\mapsto (\pi(x),x)$.
	\end{itemize}
\end{lemma}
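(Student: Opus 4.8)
The plan is to realize $\hat\lambda$ as a ``graph joining'': the copy of $Z$ sitting inside $Z\times X_1\times X_2$ should be literally the image of the $X_1$-coordinate under $\pi$. Concretely, let $X_1^{0}\subset X_1$ be the $T_1$-invariant full-measure subset on which the factor map $\pi$ is defined, and define
\[
\Phi\colon X_1^{0}\times X_2\longrightarrow Z\times X_1\times X_2,\qquad \Phi(x_1,x_2)=(\pi(x_1),x_1,x_2).
\]
Since the $X_1$-marginal of $\lambda$ is $\mu_1$ and $\mu_1(X_1^{0})=1$, the set $X_1^{0}\times X_2$ has full $\lambda$-measure, so $\hat\lambda:=\Phi_{*}\lambda$ is a well-defined Borel probability measure on $Z\times X_1\times X_2$. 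This is the candidate; everything else is routine verification.

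First I would check the dynamics and the marginals. Because $\pi$ is a factor map, $\pi\circ T_1=R\circ\pi$ on $X_1^{0}$, hence $\Phi\circ(T_1\times T_2)=(R\times T_1\times T_2)\circ\Phi$; as $\lambda$ is $(T_1\times T_2)$-invariant, $\hat\lambda$ is $(R\times T_1\times T_2)$-invariant. The coordinate projection $Z\times X_1\times X_2\to X_1\times X_2$ composed with $\Phi$ is the identity, so the $(X_1\times X_2)$-marginal of $\hat\lambda$ is $\lambda$; and the projection to $Z$ composed with $\Phi$ is $(x_1,x_2)\mapsto\pi(x_1)$, which pushes $\lambda$ forward to $\pi_{*}\mu_1=m$. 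Thus $\hat\lambda$ is a joining of $(X_1\times X_2,\B_1\otimes\B_2,\lambda,T_1\times T_2)$ and $(Z,\B(Z),m,R)$, which is the first assertion. (Note that neither ergodicity of $\lambda$ nor the Kronecker property of $Z$ is actually used here; the same construction works for an arbitrary factor of $X_1$.)

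For the second assertion, write $\psi\colon X_1^{0}\to Z\times X_1$, $\psi(x_1)=(\pi(x_1),x_1)$. Projecting $\hat\lambda$ to $Z\times X_1$ and using once more that the $X_1$-marginal of $\lambda$ is $\mu_1$ gives $\hat\mu_1=\psi_{*}\mu_1$. The map $\psi$ is injective with measurable inverse---namely the coordinate projection $(z,x_1)\mapsto x_1$---it intertwines $T_1$ with $R\times T_1$ (again because $\pi\circ T_1=R\circ\pi$), and it carries $\mu_1$ to $\hat\mu_1$ by construction. Hence, taking $Y_0=\psi(X_1^{0})$, a full-measure $(R\times T_1)$-invariant subset of $Z\times X_1$, the map $\psi$ is an isomorphism from $(X_1,\B_1,\mu_1,T_1)$ onto $(Z\times X_1,\B(Z)\otimes\B_1,\hat\mu_1,R\times T_1)$, as claimed. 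The only points that require a bit of care are the bookkeeping of the full-measure invariant domains on which $\pi$ (hence $\Phi$, $\psi$) is defined, and confirming that the relevant coordinate projections push $\hat\lambda$ to the stated marginals; I do not expect any genuine obstacle beyond that.
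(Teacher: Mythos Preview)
Your proof is correct and in fact more direct than the paper's. Both constructions produce the \emph{same} measure $\hat\lambda$, namely the pushforward of $\lambda$ under the graph map $(x_1,x_2)\mapsto(\pi(x_1),x_1,x_2)$, but the paper takes a detour: it invokes Glasner's structure theorem for ergodic joinings to write $\lambda=\int_H\mu_{1,h}\times\mu_{2,h}\,d\nu(h)$ as a relatively independent joining over a common factor $H$, and then defines $\hat\lambda$ fiberwise via this disintegration. Unwinding their integral formula shows it equals your $\Phi_*\lambda$, so the common factor $H$ plays no real role in the end.

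Your approach buys simplicity and generality: as you observe, neither the ergodicity of $\lambda$ nor the Kronecker property of $Z$ is needed, and you avoid citing an external structure theorem. The paper's route, while correct, obscures the fact that the lemma is really just the elementary observation that one can always extend a joining by adjoining a factor of one of the coordinates via its graph.
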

\begin{proof}
   	By \cite[Theorem 6.8]{G-book}, there is a common factor $(H,\mathcal{D},\nu,S)$ of $(X_1,\B_1,\mu_1,T_1)$ and $(X_2,\B_2,\mu_2,T_2)$ such that $$\lambda=\int_{H}\mu_{1,h}\times \mu_{2,h}d\nu(h),$$ where $$\mu_1=\int_{H}\mu_{1,h}d\nu(h)\ \text{and}\ \mu_2=\int_{H}\mu_{2,h}d\nu(h)$$ are disintegrations of $\mu_1$ and $\mu_2$ with respect to $H$, respectively. Let
   $$\hat{\lambda}=\int_{H}\Big(\int_{X_1}\delta_{\pi(x)}\times \delta_{x}d\mu_{1,h}(x)\Big)\times \mu_{2,h}d\nu(h).$$
    Clearly, $\hat{\lambda}$ is $R\times T_1\times T_2$-invariant. Note that the image of $\mu_1$ under the map $\pi$ is $m$. So, $\hat{\lambda}$ is a joining of $(X_1\times X_2,\B_1\otimes \B_2,\lambda,T_1\times T_2)$ and $(Z,\B(Z),m,R)$. Let $\hat{\mu}_1$ be the projections of $\hat{\lambda}$ with respect to $Z\times X_1$. Then $\hat{\mu}_1=\int_{X_1}\delta_{\pi(x)}\times \delta_{x}d\mu_{1}(x)$. Therefore, $\hat{\mu}_1$ is the image of $\mu_1$ under the map $X_1\to Z\times X_1,x\mapsto (\pi(x),x)$ and ${\mu}_1$ is the image of $\hat{\mu}_1$ under the map $Z\times X_1\to X_1,(x,z)\mapsto x$. This implies that  $(X_1,\B_1,\mu_1,T_1)$ and $(Z\times X_1,\B(Z)\otimes \B_1,\hat{\mu_1},R\times T_1)$ are isomorphic. This finishes the proof.
\end{proof}
\subsection{Proof of Theorem \ref{T3-1}}
\begin{proof}[Proof of Theorem \ref{T3-1}]
	Let $x\in X,r\in \N$ and fix them. Let
	\begin{align*}
		& \Lambda(x)=\text{Min}((x,x),\sigma)\cap \{(y,z)\in X\times X:\text{there}\ \text{exists}\ \text{a}\ \text{strictly}\\ & \hspace{2cm}\ \text{increasing}\ \text{sequence}
		 \ \{n_i\}_{i\ge 1}\subset \N\ \text{such}\ \text{that}\ \sigma^{n_i}(x,x)\to (y,z)\ \text{as}\ i\to\infty\}.
	\end{align*}
	By applying Lemma \ref{lem4} to $(\overline{\text{orb}((x,x),\sigma)},\sigma)$ and $(x,x)$, there exists $(y,z)\in \text{Min}((x,x),\sigma)$ and a sequence $\{n_i\}_{i\ge 1}\subset \N$ such that $ \sigma^{n_i}(x,x)\to (y,z)$ as $i\to\infty$. If $\{n_i\}_{i\ge 1}$ is unbounded, $\Lambda(x)\neq \varnothing$. Otherwise, there exists $m\in\N$ such that $\sigma^{m}(x,x)$ is a minimal point of $(X\times X,\sigma)$. By Lemma \ref{lem5}, there is a strictly increasing sequence $\{k_i\}_{i\ge 1}\subset \N$ such that $ \sigma^{k_i+m}(x,x)\to \sigma^{m}(x,x)$ as $i\to\infty$. Then $\Lambda(x)\neq \varnothing$.
	
	To sum up, $\Lambda(x)\neq \varnothing$. By Lemma \ref{lem1}, the rest proof can be divided into two cases.
	
	\noindent \textbf{Case I. There exists $(y,z)\in \Lambda(x)$ with the following: There exists $d\in \N$ such that for any $k\in \N$,
		$y(d)\neq z(d),y(d)=y(d+a(b-a)k),\ \text{and}\ z(d)=z(d+b(b-a)k)$.}
	
	Clearly, for the case, Theorem \ref{T3-1} is a direct result of Proposition \ref{pro1}.
	
	\noindent \textbf{Case II. There exist $\gamma\in\N$ and $(y,z)\in \Lambda(T^{\gamma}x)$ such that $Y\cap U\neq \varnothing$, where $Y=\overline{\text{orb}((y,z),\sigma)}$.}
	
	Recall that $U=(E_1\times E_1)\cup (E_2\times E_2)$. Without loss of generality, we can assume that $Y\cap (E_1\times E_1)\neq \varnothing$. Let $\lambda\in \mathcal{M}^{e}(Y,\sigma)$. As $(Y,\sigma)$ is minimal, we know that $$\lambda(Y\cap (E_1\times E_1))=\delta_0$$
	for some $\delta_0>0$. Let $(y_1,y_2)\in Y$ be a generic point of $\lambda$. Let $X_1=\overline{\text{orb}(y_1,T^a)}$ and $X_2=\overline{\text{orb}(y_2,T^b)}$. We can view $\lambda$ as a Borel probability measure on $X_1\times X_2$. Let $\mu_1,\mu_2$ be the projections of $\lambda$ with respect to $X_1$ and $X_2$, respectively. Let $(Z,\B(Z),m,R)$ be the Kronecker factor of $(X_1,\B(X_1),\mu_1,T^a)$ and $\pi$ be the related factor map from $X_1$ to $Z$. By Lemma \ref{lem3}, there exists $\hat{\lambda}\in \mathcal{M}(Z\times X_1\times X_2,R\times \sigma)$ such that the following hold:
	\begin{itemize}
		\item $\hat{\lambda}$ is a joining of $(X_1\times X_2,\B(X_1)\otimes \B(X_2),\lambda,T^a\times T^b)$ and $(Z,\B(Z),m,R)$;
		\item Let $\hat{\mu}_1$ be the projections of $\hat{\lambda}$ with respect to $Z\times X_1$. Then $(X_1,\B(X_1),\mu_1,T^a)$ and $(Z\times X_1,\B(Z)\otimes \B(X_1),\hat{\mu}_1,R\times T^a)$ are isomorphic by the map $X_1\to Z\times X_1,s\mapsto (\pi(s),s)$.
	\end{itemize}
	Since $X_1\cap E_1$ is clopen on $X_1$, $1_{X_1\cap E_1}$ is continuous on $X_1$. By the setting of $(y_1,y_2)$, then
	\begin{align*}
	 & \hat{\mu}_1(Z\times (X_1\cap E_1))=\mu_{1}(X_1\cap E_1)=\lim_{N\to\infty}\frac{1}{N}\sum_{n=1}^{N}1_{X_1\cap E_1}(T^{an}y_1)
	 \\ = &
	 \lim_{N\to\infty}\frac{1}{N}\sum_{n=1}^{N}1_{Y\cap \big((X_1\cap E_1)\times X_2\big)}(T^{an}y_1,T^{bn}y_2)
	 \\ \ge &
	 \lim_{N\to\infty}\frac{1}{N}\sum_{n=1}^{N}1_{Y\cap (E_1\times E_1)}(T^{an}y_1,T^{bn}y_2)
	 \\ = &
	 \lambda(Y\cap (E_1\times E_1))
	 \\ = &
	 \delta_0.
	\end{align*} The third equality follows from the fact that $1_{Y\cap (E_1\times E_1)}$ is  continuous on $Y$.

	Let $\phi=\E_{\hat{\mu}_1}(1_{Z\times (X_1\cap E_1)}|Z)$. Apply Lemma \ref{lem2} to $Z\times X_1\to Z,(z,y)\mapsto z$, $Z\times (X_1\cap E_1)$ and $\ep=\delta_0/2$. Then there exists a measurable subset $D_{\ep}\subset Z\times (X_1\cap E_1)$ such that the following hold:
		\begin{itemize}
			\item For any $(u,v)\in D_{\ep}$, $\phi(u,v)\ge \ep$;
			\item $\hat{\mu}_1(D_{\ep})\ge \hat{\mu}_1(Z\times (X_1\cap E_1))-\ep$.
		\end{itemize} As $\phi\geq 0$, we have
		\begin{align*}
			& \int\phi(u,v)1_{X_2\cap E_1}(w)d\hat{\lambda}(u,v,w)
			\\ \ge & \int_{D_{\ep}\times X_2}\phi(u,v)1_{X_2\cap E_1}(w)d\hat{\lambda}(u,v,w)
			\\ \ge &
			\frac{\delta_0}{2}\hat{\lambda}(D_{\ep}\times (X_2\cap E_1)).
		\end{align*}
		Note that
		\begin{align*}
		&  \Big|\hat{\lambda}(D_{\ep}\times (X_2\cap E_1))-{\lambda}(Y\cap (E_1\times E_1))\Big|
			\\ = &
			\Big|\hat{\lambda}(D_{\ep}\times (X_2\cap E_1))-\hat{\lambda}(Z\times (X_1\cap E_1)\times (X_2\cap E_1))\Big|
			\\ \le &
			\hat{\mu}_1(Z\times (X_1\cap E_1))-\hat{\mu}_1(D_{\ep})
			\\ \le &
			\frac{\delta_0}{2}.
		\end{align*}
		So, we have that
		\begin{equation}\label{eq3}
			\int\phi(u,v)1_{X_2\cap E_1}(w)d\hat{\lambda}(u,v,w)\ge \frac{\delta_0}{2}({\lambda}(Y\cap (E_1\times E_1))-\frac{\delta_0}{2})=\frac{\delta_0^2}{4}.
		\end{equation}
		Let $\delta=\delta_{0}^{2}/16$. Since $Z$ is a compact abelian group, by the related Character Theory (see \cite{HR}), there exists an open subset $V$ containing identity element $e_{Z}$ of $Z$ such that for any $\xi\in V$,
		\begin{equation}\label{eq1}
			\int_{Z}|\phi(u+\xi)-\phi(u)|^{2}dm(u)<\delta^2/16,
		\end{equation}
		 where we view $\phi$ as an element of $L^{2}(Z,\B(Z),m)$.
		
		Let $\psi=1_{Z\times (X_1\cap E_1)}-\phi$ and $\theta\in L^{2}(\hat{\mu}_1)$\footnote{To get the specific form of $\theta$, one can see \cite[Page 129]{G-book}.} such that for any $g\in L^{2}(\hat{\mu}_1)$, $$\int 1_{X_2\cap E_1}\cdot g d\hat{\lambda}=\int \theta \cdot gd\hat{\mu}_1.$$
		By Birkhoff's ergodic theorem, we have
		\begin{align*}
			&\lim_{N\to\infty}\frac{1}{N}\sum_{n=1}^{N}\Big|\int \theta\cdot(R\times T^a)^{(b-a)n}\psi d\hat{\mu}_1\Big|^{2}
			\\ = &
			\lim_{N\to\infty}\frac{1}{N}\sum_{n=1}^{N}\int ((R\times T^a)\times (R\times T^a))^{(b-a)n}(\psi \otimes \bar{\psi})\cdot (\theta \otimes \bar{\theta})d\hat{\mu}_1\times \hat{\mu}_1
			\\ = &
			\int \Big(\lim_{N\to\infty}\frac{1}{N}\sum_{n=1}^{N} ((R\times T^a)\times (R\times T^a))^{(b-a)n}(\psi \otimes \bar{\psi})\Big)\cdot (\theta \otimes \bar{\theta})d\hat{\mu}_1\times \hat{\mu}_1
			\\ = &
			\int \E_{\hat{\mu}_1\times \hat{\mu}_1}(\psi \otimes \bar{\psi}|\mathcal{I}((R\times T^a)\times (R\times T^a)))\cdot (\theta \otimes \bar{\theta})d\hat{\mu}_1\times \hat{\mu}_1,
		\end{align*} where $\mathcal{I}((R\times T^a)\times (R\times T^a))$ is a sub-$\sigma$-algebra, spanned by all $(R\times T^a)\times (R\times T^a)$-invariant functions, on $(Z\times X_1)\times (Z\times X_1)$.
		Note that $(X_1,\B(X_1),\mu_1,T^a)$ and $(Z\times X_1,\B(Z)\otimes \B(X_1),\hat{\mu}_1,R\times T^a)$ are isomorphic. Then by Lemma \ref{lem6}, $(Z,\B(Z),m,R^{b-a})$ is the Kronecker factor of $(Z\times X_1,\B(Z)\otimes \B(X_1),\hat{\mu}_1,(R\times T^a)^{b-a})$. By Lemma \ref{eigenfunctions and invariant}, Corollary \ref{orthogonal with eigenfunctions} and $ \E_{\hat{\mu}_1}(\psi|Z)=0$, one has that  $$\lim_{N\to\infty}\frac{1}{N}\sum_{n=1}^{N}\Big|\int \theta\cdot(R\times T^a)^{(b-a)n}\psi d\hat{\mu}_1\Big|^{2}=0.$$
		So, $$\lim_{N\to\infty}\frac{|\{n\in B_0:n\le N\}|}{N}=0,$$ where
		$$B_0=\Big\{n\in\N:\Big|\int \theta\cdot(R\times T^a)^{(b-a)n}\psi d\hat{\mu}_1\Big|\ge \delta/4\Big\}.$$

Let $$B_1=\{n\in \N:R^{(b-a)n}e_{Z}\in V\}\backslash B_0.$$ By \cite[Proposition 4.5]{HK}, $(\overline{\{R^{(b-a)n}e_{Z}:n\in \Z\}},R^{b-a})$ is a minimal topological dynamical system. Since $V\cap \overline{\{R^{(b-a)n}e_{Z}:n\in \Z\}}\neq \varnothing$, by Lemma \ref{lem5}, we know that $$\limsup_{N\to\infty}\frac{|\{n\in \N:n\le N,R^{(b-a)n}e_{Z}\in V\}|}{N}>0.$$ Hence, $B_1$ has positive upper Banach density. Note that $\norm{\theta}_{L^{2}(\hat{\mu}_1)}\le 1$. By \eqref{eq1} and the definition of $B_1$, for any $n\in B_1$, 
		\begin{equation}\label{eq2}
			\Big|\int ((R\times T^a)^{(b-a)n}\phi-\phi)\cdot \theta d\hat{\mu}_1\Big|< \delta/4\ \text{and}\ \Big|\int \theta\cdot(R\times T^a)^{(b-a)n}\psi d\hat{\mu}_1\Big|<\delta/4.
		\end{equation}
		By \eqref{eq2}, the definitions of $\psi$ and $\theta$, \eqref{eq3} and $\ep^{2}=4\delta$, we know that for any $n\in B_1$,
		\begin{align*}
			& \Big|\int 1_{X_2\cap E_1} \cdot (R\times T^a)^{(b-a)n}1_{Z\times (X_1\cap E_1)}d\hat{\lambda}\Big|
			\\ = &
			\Big|\int \theta \cdot (R\times T^a)^{(b-a)n}1_{Z\times (X_1\cap E_1)}d\hat{\mu}_1\Big|
			\\ = &
			\Big|\int \theta \cdot (R\times T^a)^{(b-a)n}\psi d\hat{\mu}_1+\int \theta \cdot ((R\times T^a)^{(b-a)n}\phi -\phi) d\hat{\mu}_1+\int \theta\phi d\hat{\mu}_1\Big|
			\\ \ge &
			\Big|\int \theta\phi d\hat{\mu}_1\Big|-\Big|\int \theta \cdot (R\times T^a)^{(b-a)n}\psi d\hat{\mu}_1\Big|-\Big|\int \theta \cdot ((R\times T^a)^{(b-a)n}\phi -\phi) d\hat{\mu}_1\Big|
			\\ \ge &
			\delta.
		\end{align*}
	That is, for any $n\in B_1$,
		\begin{equation}\label{eq4}
			\lambda(T^{-a(b-a)n}(X_1\cap E_1)\times (X_2\cap E_1))\ge \delta.
		\end{equation}
		Since $\lambda$ is $\sigma$-invariant, $$\inf_{n\in B_1}\lambda(\tau^{-ban}((X_1\cap E_1)\times (X_2\cap E_1)))\ge \delta.$$
		By Theorem \ref{CT2}, there exists $B_2\subset B_1$ with $|B_2|=r$ such that
		$$\lambda(\bigcap_{n\in B_2}\tau^{-ban}((X_1\cap E_1)\times (X_2\cap E_1)))>0.$$ As $\displaystyle 1_{Y\bigcap (\bigcap_{n\in B_2}\tau^{-ban}((X_1\cap E_1)\times (X_2\cap E_1)))}$ is a continuous function on $Y$ and $(y_1,y_2)$ is a generic point of $\lambda$, the upper Banach density of $C$ is positive, where $$C=\{n\in\N: \sigma^{n}(y_1,y_2)\in \bigcap_{n\in B_2}\tau^{-ban}((X_1\cap E_1)\times (X_2\cap E_1))\}.$$
		
		Let $B_3=baB_2$. Then for any $h\in B_3,k\in C$, we have $$\tau^{h}\sigma^{k}(y_1,y_2)\in E_1\times E_1.$$ Let $A=\bigcap_{n\in B_3}\tau^{-n}(E_1\times E_1)$. Since $B_3$ is finite, $A$ is open on $X\times X$.
		
		We write $C$ as $\{k_1<k_2<\cdots\}$. By the setting of $(y,z)$,
		there exists $l_1\in \Z$ such that $\sigma^{l_1+k_1}(y,z)\in A$. By the setting of $(y,z)$, one can find $s_1\in \N$ such that $s_1+l_1+k_1\in \N$ and $\sigma^{s_1+l_1+k_1}\tau^{\gamma}(x,x)\in A$. Repeat the process. Finally, we can find
		  a strictly increasing sequence $\{t_i\}_{i\ge 1}\subset \N$ such that $$\sigma^{t_i}\tau^{\gamma}(x,x)\in A.$$
		
		Now, let $\tilde{B}=\gamma+B_3$ and $\tilde{C}=\{t_i:i\ge 1\}$. This finishes the proof.
	\end{proof}

\section{Proofs of Propositions \ref{P1}, \ref{P2}}\label{SC}
In this section, we show Propositions \ref{P1}, \ref{P2} by constructing some specific counter-examples.

\subsection{Proof of (1) of Proposition \ref{P1}}
\begin{proof}
		Now, we view $Q$ and $P$ as two elements of $\R[t]$. By the assumption on $P$ and $Q$, there exists $a>0$ such that for any $t>a$, the following hold:
	\begin{itemize}
		\item $Q(t)>P(t)\ge 1$;
		\item $Q'(t)>P'(t)\ge 1$.
	\end{itemize}
	Denote the inverse function of $P(t)$ on $(P(a),\infty)$ by $P^{-1}(t)$. Define $\psi:(P(a),\infty)\to (Q(a),\infty)$ by $t\mapsto Q\circ P^{-1}(t)$. In this case, we have that for any $t>P(a),\psi'(t)>1,\psi(t)-t>0$ and $\psi(t)-t$ is strictly increasing on $(P(a),\infty)$. Let $\delta=\deg Q/\deg P$ and $c$ be the ratio of the leading coefficients of $Q$ and $P$. Then we have
	\begin{equation}\label{eq100}
		\psi(t)= ct^{\delta}+\Phi(t),
	\end{equation}
	where $\Phi$ is a smooth function on $(P(a),\infty)$ such that
\begin{itemize}
\item ${\displaystyle\lim_{t\rightarrow \infty}\frac{\Phi(t)}{t^{\delta}}}=0$;
\item $\Phi\equiv0$ or ${\displaystyle\lim_{t\to \infty}\frac{\log t}{\Phi(t)}=0.}$
\end{itemize}
	
	The rest proof is divided into two cases.
	
	\noindent \textbf{Case I. $\delta>1$ or $\delta=1,c>1$}.
		Let $f(t)=\log t$ on $(0,\infty)$. Let $a_0$ be a sufficiently large positive integer  sufficing three properties listed below.

\noindent \textbf{Property I:}
 For any $t>a_0$ and any $r\in [0,f\circ\psi(t)),s\ge 0,r+Q(s)\ge t/2,r,s\in\N$, the equation
 $$r+Q(s)=x+Q(y)$$
  has the unique integer solution $x=r,y=s$ in the region
 $$\{(x,y)\in \R^2:0\le x<f\circ \psi(t),y\ge 0\}.$$

 The reason why this property can hold follows from the fact that $\deg Q>1$.

\noindent \textbf{Property II:} For any $t>a_0/2$ and any $0\leq h<\left\{
                                                                                                                                                      \begin{array}{ll}
                                                                                                                                                        \frac{2c-2}{3c}f(t), & \hbox{if $\delta=1$;} \\
                                                                                                                                                        \frac{1}{2}f(t), & \hbox{if $\delta>1$.}
                                                                                                                                                      \end{array}
                                                                                                               \right.$,
 \begin{equation}\label{c1-1}
 \psi(t+f(t)-h)-\psi(t)>f(\psi(t))-h.
 \end{equation}

The following statement explaines the reason why this property can hold. By using Mean Value Theorem, we have that
\begin{equation}\label{c1-2}
\psi(t+f(t)-h)-\psi(t)=(f(t)-h)\psi'(t+\theta),
\end{equation}
where $\theta\in [0,f(t)-h]$. Note that
	\begin{align*}
       & h+(f(t)-h)\psi'(t+\theta)-f(\psi(t))
		\\ = &
		\psi'(t+\theta)\Big(f(t)-h+\frac{h}{\psi'(t+\theta)}-\frac{\log c+\delta\log t+\log \big(1+\frac{\Phi(t)}{ct^{\delta}}\big)}{c\delta(t+\theta)^{\delta-1}+\Phi'(t+\theta)}\Big).
	\end{align*}
Recall two properties of $\Phi$. When $t$ is sufficiently large, we have that
$$\frac{\log c+\delta\log t+\log \big(1+\frac{\Phi(t)}{ct^{\delta}}\big)}{c\delta(t+\theta)^{\delta-1}+\Phi'(t+\theta)}\leq \left\{
                                                                                                                                                      \begin{array}{ll}
                                                                                                                                                        \frac{c+2}{3c}f(t), & \hbox{if $\delta=1$;} \\
                                                                                                                                                        \frac{1}{2}f(t), & \hbox{if $\delta>1$.}
                                                                                                                                                      \end{array}                                                                                                        \right.$$
 Thus we have
$$h+(f(t)-h)\psi'(t+\theta)-f(\psi(t))>0.$$
Together with \eqref{c1-2}, we have \eqref{c1-1}.

\noindent \textbf{Property III:}
\begin{itemize}
\item There exists $\lambda_0>\delta$ such that $\psi'(t)>\lambda_0^2$ for any $t>f(a_0)$;\footnote{If $\delta=1$, then the existence of $\lambda_0$ is from $\psi'(t)>1$. If $\delta>1$, then the existence of $\lambda_0$ is clear.}
\item Let $\epsilon_0=(\lambda_0-\delta)/2$. For any $t>a_0/2$, we have that  $f(\psi(t))<(\lambda_0-\epsilon_0)f(t)$;\footnote{It follows from \eqref{eq100} and the properties of $\Phi$.}
\item $\psi(a_0-f(a_0))-a_0-f(\psi(a_0))\geq uf(a_0)$, where $u$ is a positive constant whose specific value will be given later.\footnote{By \eqref{eq100}, $u$ always exists.}
\end{itemize}

	 Let $A_{0}=[a_0,a_0+f(a_0))\cap \N$. For each $n\in \N$, let $$a_n=\psi\circ \cdots \circ \psi(a_0)\ (n\ \text{times})\ \text{and}$$
	 \begin{align*}
	 	& A_{n}=([a_n,a_n+f(a_{n}))\cap\N)\cup \{i+Q(j):i,j\in\Z,
	 	\\ & \hspace{3cm}
	 	0\le i<f(a_{n-1}),j\ge 0,i+P(j)\in A_{n-1}\}.
	 \end{align*}
	 Let $$A=\bigcup_{n\ge 0}A_n.$$
	
	For the sequence $\{A_n\}_{n\ge 0}$, there is a claim, whose proof is displayed in the ending of the case.
	\begin{cl}\label{cl1}
		For any $n\ge 0$, $\inf A_{n+1}>\sup A_n$.
	\end{cl}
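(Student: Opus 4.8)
\textbf{Proof plan for Claim \ref{cl1}.}

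The plan is to prove by induction on $n$ that $\inf A_{n+1} > \sup A_n$, tracking simultaneously the two pieces of which each $A_{n+1}$ is composed: the ``interval part'' $[a_{n+1}, a_{n+1}+f(a_{n+1})) \cap \N$ and the ``shifted-polynomial part'' coming from elements $i + Q(j)$ with $i+P(j) \in A_n$. First I would record the obvious estimate $\sup A_n < a_n + f(a_n)$ when $n\ge 1$ (and $\sup A_0 = a_0 + f(a_0) - 1$ for the base case), together with the fact that $\inf A_{n+1}$ is the minimum of $a_{n+1}$ and the smallest value of $i+Q(j)$ arising from $A_n$; since $\psi = Q\circ P^{-1}$ is increasing and $\psi(t) - t$ is strictly increasing and positive on $(P(a),\infty)$, one has $a_{n+1} = \psi(a_n) > a_n + f(a_n)$ as soon as $\psi(a_n) - a_n > f(a_n)$, which holds for $a_0$ chosen large enough by Property III (and then propagates since $\psi(t)-t \to \infty$). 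So the interval part of $A_{n+1}$ lies entirely above $\sup A_n$.

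The substantive point is the shifted-polynomial part. Here I would take any $i,j \in \Z$ with $0 \le i < f(a_n)$, $j \ge 0$ and $i + P(j) \in A_n$, and show $i + Q(j) > \sup A_n$. Write $t = i + P(j)$, so $a_n \le t < a_n + f(a_n)$ (using the inductive description of $A_n$'s interval part, plus the already-established fact that the interval part dominates for $A_n$, which lets me assume $t$ lies in the interval component of $A_n$ — or more carefully, handle $A_n$'s two sub-parts separately). Then $i + Q(j) = i + Q(P^{-1}(P(j))) = i + \psi(P(j)) = i + \psi(t - i)$. I want this to exceed $\sup A_n$, and in fact I expect to prove the stronger statement $i + \psi(t-i) \ge a_{n+1}$ or at least $i + \psi(t-i) > a_n + f(a_n)$; the relevant inequality is exactly of the shape controlled by Property II, namely $\psi(t + f(t) - h) - \psi(t) > f(\psi(t)) - h$ with a suitable identification of $h$ with $i$ (note $0 \le i < f(a_n)$ and the required bound on $h$ in Property II is a definite fraction of $f(t)$, which $i$ satisfies once $a_0$ is large because $f(a_n) \le f(t)$ and $t \ge a_n$). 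Rearranging Property II gives $i + \psi(t-i) > \psi(t) - f(\psi(t)) + $ (something), and combined with $t \ge a_n$, monotonicity of $\psi$, and $f(\psi(a_n)) < (\lambda_0 - \epsilon_0) f(a_n) < a_{n+1}$-type control from Property III, this should land $i + \psi(t-i)$ above $\sup A_n$.

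The main obstacle, and the place where the argument must be set up carefully, is that $A_n$ itself is a union of its interval part and its own shifted-polynomial part, so when I write ``$t = i+P(j) \in A_n$'' I must handle the case where $t$ lies in $A_n$'s shifted-polynomial component rather than its interval $[a_n, a_n + f(a_n))$. In that case $t$ could in principle be much larger — it is some $i' + Q(j')$ — and I would need the \emph{upper} bound $\sup A_n < a_n + f(a_n)$ to already be known, which is precisely part of the inductive hypothesis (it follows from $\inf A_n > \sup A_{n-1}$ together with the structure of $A_n$, since every shifted-polynomial element $i' + Q(j')$ of $A_n$ has $i' + P(j') \in A_{n-1} \subset [0, a_{n-1}+f(a_{n-1}))$, forcing $j'$ small and hence $i' + Q(j')$ within the claimed window by a Property-II / Property-I estimate). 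So the clean way to run the induction is to strengthen the inductive hypothesis to the two-sided statement ``$\sup A_{n-1} < a_n \le \inf(\text{interval part of } A_n)$ and $\sup A_n < a_n + f(a_n)$,'' prove that package from its predecessor using Properties I--III for the choice of $a_0$, and read off Claim \ref{cl1} as an immediate consequence. I expect the verification that all the needed inequalities hold simultaneously for a single large $a_0$ to be the only delicate bookkeeping, and Properties I--III were stated precisely to make it go through.
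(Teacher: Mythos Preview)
Your plan has a genuine gap: the ``obvious estimate'' $\sup A_n < a_n + f(a_n)$ is simply false for $n\ge 1$, and since your whole inductive package is built around it, the argument cannot close. To see why, take $i=0$ and $j$ with $P(j)$ just below $a_{n-1}+f(a_{n-1})$ (so $i+P(j)\in A_{n-1}$). Then the corresponding shifted-polynomial element of $A_n$ is $Q(j)=\psi(P(j))$, which is essentially $\psi(a_{n-1}+f(a_{n-1}))$. But Property~II with $t=a_{n-1}$ and $h=0$ says precisely
\[
\psi(a_{n-1}+f(a_{n-1}))-\psi(a_{n-1})>f(\psi(a_{n-1})),\qquad\text{i.e.}\qquad \psi(a_{n-1}+f(a_{n-1}))>a_n+f(a_n),
\]
so $\sup A_n>a_n+f(a_n)$. (Concretely, with $P(n)=n$, $Q(n)=n^2$ one has $a_1=a_0^2$, $f(a_1)=2\log a_0$, while the polynomial part of $A_1$ reaches roughly $(a_0+\log a_0)^2\approx a_0^2+2a_0\log a_0\gg a_0^2+2\log a_0$.) Consequently your step ``the interval part of $A_{n+1}$ lies above $\sup A_n$ because $a_{n+1}>a_n+f(a_n)$'' does not give what you need, and the strengthened inductive hypothesis you propose is never true beyond $n=0$.

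The paper handles exactly this by abandoning the window $[a_n,a_n+f(a_n))$ as a bound and introducing two auxiliary sequences: $b_n=\psi(b_{n-1})$ with $b_0=a_0+f(a_0)$ (so $b_n=\psi^{\circ n}(a_0+f(a_0))$), and $c_n=\psi(c_{n-1}-f(a_{n-1}))+f(a_{n-1})$ with $c_0=a_0$. One shows inductively that $\sup A_n\le b_n$ (here Property~II is used to show the interval part $a_n+f(a_n)$ is itself $\le b_n$, and $\psi'>1$ handles the polynomial part) and $\inf A_n\ge c_n$ (using that $u\mapsto \psi(m-u)+u$ is decreasing). The heart of the argument is then a \emph{separate} induction, driven by Property~III, establishing the quantitative gap $c_{n+1}-f(a_{n+1})-b_n\ge u\,f(a_n)$ for a fixed $u>0$; this is where the constants $\lambda_0,\epsilon_0$ and the specific choice of $u$ enter. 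The point is that $b_n$ grows genuinely faster than $a_n+f(a_n)$, and one must check that $c_{n+1}$ outruns $b_n$ nonetheless --- your plan never confronts this comparison.
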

	Now, we define a $2$-coloring of $\N$, denoted by $\phi$, by the following:
	
	For any $z\in\N$, if there exists $n\in \N$ such that $z\in [a_{2n-1},a_{2n})\backslash A$, we assign color $1$ to $z$; if there exists $n\ge 0$ such that $z\in [a_{2n},a_{2n+1})\backslash A$, we assign color $2$ to $z$; if there exists $n\ge 0$ such that $z\in A_{2n}$, we assign color $2$ to $z$; if there exists $n\ge 0$ such that $z\in A_{2n+1}$, we assign color $1$ to $z$; otherwise, we assign color $1$ to $z$. The coloring is well-defined due to Claim \ref{cl1}.
	
	Take $h$ from $\N$ arbitrarily and fix it. Assume that there exists a strictly increasing sequence $\{k_i\}_{i\ge 1}\subset \N$ such that $\phi$ takes constant value on the set $$\{h+P(k_1),h+P(k_2),\ldots\}\cup \{h+Q(k_1),h+Q(k_2),\ldots\}.$$
	
	Take sufficiently large $k$\footnote{The following restrictions on $k$ are possible due to Claim \ref{cl1} and strictly increasing behavior of $\{a_m\}_{m\ge 0}$.} from $\{k_i:i\ge 1\}$ such that
\begin{itemize}
\item $k>a_1$;
\item for any $m\in \N$ with $a_{m}\geq P(k)$, we have $h<\left\{    \begin{array}{ll}
                                                                                         \frac{2c-2}{3c}f(a_{m-1}), & \hbox{if $\delta=1$;} \\
                                                                 \frac{1}{2}f(a_{m-1}), & \hbox{if $\delta>1$.}
                                                                                                                                   \end{array}
                                                                    \right.$.
\item for any $n\in \N$ with $\inf A_{n}>k $, we have $h<f(\inf A_{n-1}) $;
\item for any $m\in \N$ with $a_{m}\geq P(k)$, we have $a_{m-1}-h>a_{0}$.
\end{itemize}
Then:
	\begin{itemize}
		\item If $\phi(h+P(k))=2$:
		
		If $h+P(k)\in A$, then there exists $n\ge 0$ such that $h+P(k)\in A_{2n}$. So, by thoes restrictions for $k$, $h+Q(k)\in A_{2n+1}$. That is, $\phi(h+Q(k))=1$. It is a contradiction.
		
		If $h+P(k)\notin A$, then there exists $n\ge 0$ such that $h+P(k)\in [a_{2n},a_{2n+1})$. By $\psi'>1$ on $(a_0,\infty)$ and those restrictions for $k$, we have that
		\begin{align}
			& h+Q(k) =h+ \psi(P(k))<h + \psi(a_{2n+1}-h) \notag
			\\ = &
			\psi(a_{2n+1})+h(1-\psi'(a_{2n+1}-\tilde{h}))\hspace{0.5cm}\text{(Mean Value Theorem)} \label{eq6}
			\\ < &
			a_{2n+2}, \notag
		\end{align} where $\tilde{h}\in [0,h]$. Meanwhile, one has that $$([a_{2n},a_{2n+1})\cap \N)\setminus A\subset [a_{2n}+f(a_{2n}),a_{2n+1})\cap \N.$$ Hence, $P(k)\geq a_{2n}+ f(a_{2n})-h$. By \eqref{c1-1} and those restrictions for $k$, we have that
		\begin{align}
			& Q(k) =\psi(P(k))\geq \psi(a_{2n}+ f(a_{2n})-h) \notag
			\\ \geq &
			\psi(a_{2n})+f(\psi(a_{2n}))-h=a_{2n+1}+f(a_{2n+1})-h.
		\end{align}	
Therefore, $h+Q(k)\in [a_{2n+1}+f(a_{2n+1}),a_{2n+2})$. So, by \textbf{Property I} of $a_0$ and $h+P(k)\notin A$,  $h+Q(k)\notin A$. Then $\phi(h+Q(k))=1$. It is a contradiction.
		\item If $\phi(h+P(k))=1$:
		
		If $h+P(k)\in A$, then there exists $n\ge 0$ such that $h+P(k)\in A_{2n+1}$. So, by thoes restrictions for $k$, $h+Q(k)\in A_{2n+2}$. That is, $\phi(h+Q(k))=2$. It is a contradiction.
		
		If $h+P(k)\notin A$, then there exists $n\ge 0$ such that $h+P(k)\in [a_{2n+1},a_{2n+2})$. By some calculations like ones of the first case, we know that $h+Q(k)<a_{2n+3}$ and $h+Q(k)\ge a_{2n+2}+f(a_{2n+2})$. So, by \textbf{Property I} of $a_0$ and $h+P(k)\notin A$,  $h+Q(k)\notin A$. Then $\phi(h+Q(k))=2$. It is a contradiction.
	\end{itemize}
	
	To sum up, for any $i\in \{1,2\}$ and any $n\in \N$, the set $$\{m\in \N:\phi(n+P(m))=\phi(n+Q(m))=i\}$$ is empty or finite.
	
	Now, let us begin the proof of Claim \ref{cl1}.

		Let $b_0=a_0+f(a_0)$ and $c_0=a_0$. For any $n\in\N$, let
$b_n=\psi(b_{n-1})$ and
$c_n=\psi(c_{n-1}-f(a_{n-1}) )+f(a_{n-1})$.

Recall $\lambda_0$ and $\epsilon_0$ in \textbf{Property III} of $a_0$. Now, we show that for each $n\ge 0$, $$c_{n+1}-f(a_{n+1})-b_n \geq uf(a_n),$$
where $u=\frac{(\lambda_0-\epsilon_0)(\lambda_0-\epsilon_0-1)}{2\lambda_0\epsilon_0-\epsilon_0^2}$.
By \textbf{Property III} of $a_0$, we have $c_{1}-f(a_{1})-b_0 \geq uf(a_0)$.

Assume that when $m=n-1$, $$c_{m+1}-f(a_{m+1})-b_m \geq uf(a_m),\ \text{where}\ n\ge 1.$$ Next, we prove that $$c_{n+1}-f(a_{n+1})-b_n \geq uf(a_n).$$

By \textbf{Property III} of $a_0$, one has that $\psi'(t)>\lambda_0^2$ for any $t>f(a_0)$ and for any $n$, $f(a_{n+1})=f(\psi(a_n))<(\lambda_0-\epsilon_0)f(a_n)$. Then by Mean Value Theorem, one has that for some $t\in (b_{n-1},c_n-f(a_n))$,
\begin{align*}
& c_{n+1}-f(a_{n+1})-b_n=\psi(c_{n}-f(a_n))+f(a_n)-f(\psi(a_n))-\psi(b_{n-1})
\\ = &
\psi'(t)(c_n-f(a_n)-b_{n-1})-(f(\psi(a_n))-f(a_n))
\\ \ge &
\lambda_0^2(c_n-f(a_n)-b_{n-1})-(\lambda_0-\epsilon_0-1)f(a_n)
\\ \ge &
\lambda_0^2 uf(a_{n-1})-(\lambda_0-\epsilon_0-1)f(a_n)
\\ \ge &
\frac{\lambda_0^2u}{\lambda_0-\epsilon_0}f(a_n)-(\lambda_0-\epsilon_0-1)f(a_n)
\\ = &
(\frac{\lambda_0^2u}{\lambda_0-\epsilon_0}-\lambda_0+\epsilon_0+1)f(a_n)
\\ \ge &
u(\lambda_0-\epsilon_0)f(a_n)\geq uf(a_{n}),
\end{align*}
which completes the induction process. Note that $u>0$ and for each $n\ge 0$, $a_n>1$. Thus we have that for each $n\ge 0$, $ c_{n+1}-f(a_{n+1})-b_n>0$.

By the definition of $A_0$, we have $\inf A_0\ge a_0=c_0$. Assume that $\inf A_n\ge c_n$, where $n\ge 0$. Next, we show that $\inf A_{n+1}\ge c_{n+1}$.
As $\psi'>1$ on $(f(a_0),\infty)$, we have that for any $m\ge c_n$,
\begin{equation*}\label{decreasing}H_{m}(u)=\psi(m-u)+u \text{ is decreasing on }[0,f(a_n)).
\end{equation*} Therefore, $\inf A_{n+1}\ge \psi(c_{n}-f(a_n))+f(a_n)=c_{n+1}$, which completes the induction process.

By the definition of $A_0$, we have $\sup A_0\le b_0$. By \textbf{Property II} of $a_0$ and $\psi'>1$ on $(f(a_0),\infty)$, one has that $\sup A_1\leq b_1$. By an inductive argument, we have $\sup A_n\leq b_n$ for any $n\ge 0$.

  To sum up, for each $n\ge 0$, $\sup A_n\leq b_n<c_{n+1}\leq \inf A_{n+1}$, which proves the claim.

	\noindent \textbf{Case II. $\delta=c=1$.}
	
	There exists $l\in \N$ such that there exists $N_0\in\N$ with $P(N_0)>1$ such that for any $n\ge N_0$, $P(n)$ and $Q(n)$ are strictly increasing and $$P(n+l-1)\le Q(n)\le P(n+l).$$
	
	The rest of the case is divided into three parts.
	
	\noindent\textbf{Part I. $\lim_{n\to\infty}(Q(n)-P(n+l-1))=\infty$ and $\lim_{n\to\infty}(P(n+l)-Q(n))=\infty$.}
	
	Let $z\in\N$. If $z<P(N_0+l-1)$, we assign color $1$ to $z$; if there exists $n\ge N_0$ such that $P(n+l-1)\le z<Q(n)$, we assign color $2$ to $z$; otherwise, we assign color $1$ to $z$. Then we define a $2$-coloring on $\N$, denoted by $\phi_0$.
	
Now, we show that for any $i\in \{1,2\}$ and any $n\in \N$, the set $$\{m\in \N:\phi_0(n+P(m))=\phi_0(n+Q(m))=i\}$$denoted by $L_n$ is empty or finite.

Take $n$ from $\N$ arbitrarily and fix it. Choose $M_0 $ from $\N$ such that $ M_0>N_0+2l$ and for any $m>M_0$, $n+Q(m)\in [Q(m),P(m+l)) $ and $n+P(m)\in [P(m),Q(m-l-1)) $. By the definition of $\phi_0$, we have that $\phi_0(n+P(m))=2 $ and $\phi_0(n+Q(m))=1$ for any $m>M_0$. Thus $L_n$ has at most $M_0$ elements.
	
	\noindent \textbf{Part II. $\lim_{n\to\infty}(Q(n)-P(n+l-1))=\infty$ and $\{P(n+l)-Q(n):n\in\N\}$ is bounded.}
	
	We assign color $1$ to $\{1,\ldots,Q(N_0)-1\}$; for any $k\ge 0$, we assign color $(k\ \text{mod}\ 2)$ to $[Q(N_0+kl),Q(N_0+(k+1)l))\cap \N$. Then we define a $2$-coloring on $\N$, denoted by $\phi_1$.
		
Now we show that for any $i\in \{0,1\}$ and any $n\in \N$, the set $$\{m\in \N:\phi_1(n+P(m))=\phi_1(n+Q(m))=i\}$$denoted by $L_n$ is empty or finite.

Let $K_1=2\sup \{P(n+l)-Q(n):n\in\N\}$. Take $n$ from $\N$ arbitrarily and fix it.  Choose $M_1$ from $\N$ such that $M_1>N_0+2l$ and for any $s>M_1$, $Q(s-l+1)-Q(s-l)>n+K_1 $. Take $m$ from $(M_1,\infty)\cap \N$ arbitrarily and fix it. Then
$$n+Q(m)\in [Q(N_0+k_ml),Q(N_0+(k_m+1)l), \text{where}\ k_ml+N_0=m.$$
 And $\phi_1(n+Q(m))=(k_m\ \text{mod } 2)$. Notice that $P(m)\in [Q(m-l),Q(m-l)+K_1) $, thus one has that
\begin{equation*}\begin{array}{ll}n+P(m)\in [Q(m-l)+n,Q(m-l)+n+K_1)\\
\subset  [Q(m-l),Q(m-l+1))\subset [Q(N_0+(k_m-1)l),Q(N_0+k_ml)).
\end{array}
\end{equation*}
So, one has that $\phi_1(n+P(m))=((k_m-1)\ \text{mod } 2)\neq \phi_1(n+Q(m)) $. Thus $L_n$ has at most $M_1$ elements.

	\noindent \textbf{Part III. $\lim_{n\to\infty}(P(n+l)-Q(n))=\infty$ and $\{Q(n)-P(n+l-1):n\in\N\}$ is bounded.}
	
	At this case, $l>1$. We assign color $1$ to $(0,P(N_0))\cap \N$; for any $k\ge 0$, we assign color $(k\ \text{mod}\ 2)$ to $[P(N_0+k(l-1)),P(N_0+(k+1)(l-1)))\cap \N$. Then we define a $2$-coloring on $\N$, denoted by $\phi_2$.
	
	We will show that for any $i\in \{0,1\}$ and any $n\in \N$, the set $$\{m\in \N:\phi(n+P(m))=\phi(n+Q(m))=i\}$$denoted by $L_n$ is empty or finite.
	
	Let $K_2=2\sup \{Q(n)-P(n+l-1):n\in\Z\}$. Take $n$ from $\N$ arbitrarily and fix it.  Choose $M_2$ from $\N$ such that $M_2>N_0+2l$ and for any $s>M_2 $, $P(s-(l-1))-P(s-l)>n+K_2$. Take $m$ from $(M_2,\infty)\cap \N$ arbitrarily and fix it. Then
$$n+P(m)\in [P(N_0+s_m(l-1)),P(N_0+(s_m+1)(l-1))),\ \text{where}\ m=N_0+s_m(l-1).$$
One also has that $n+P(m+l-1)\leq n+Q(m)\leq n+P(m+l-1)+K_2$, thus one has that
$$n+Q(m)\in [P(N_0+(s_m+1)(l-1)),P(N_0+(s_m+2)(l-1)),$$
which implies that $\phi_2(n+Q(m))=((s_m+1)\ \text{mod}\ 2)\neq \phi_2(n+P(m))$. Thus $L_n$ has at most $M_2$ elements.

	The proof is complete.
\end{proof}

\subsection{Proof of (2) of Proposition \ref{P1}}
\begin{proof}
	Let $P(n)=an$ and $Q(n)=bn$, where $a,b\in \N$ and $a<b$. Take $l$ from $[\frac{b^2}{a^2},\frac{b^3}{a^3})$ arbitrarily. Then we take $x$ from $(l\cdot\frac{a}{b},\frac{b^2}{a^2})$ arbitrarily. Lastly, we take $y$ from $(x\cdot \frac{a}{b},\sqrt{x})$ arbitrarily. Clearly, $1<y<x<l$.
	
	Now, we define a $3$-coloring of $(0,\infty)$ by the following:
	
	For any $z\in (0,\infty)$, if  there exists $m\in \N$ such that $z\in [l^{m},y\cdot l^{m})$, then we assign color $1$ to $z$; if there exists $m\in \N$ such that $z\in [y\cdot l^{m},x\cdot l^{m})$, then we assign color $2$ to $z$; if there exists $m\in \N$ such that $z\in [x\cdot l^{m},l^{m+1})$, then we assign color $3$ to $z$; otherwise, we assign color $1$ to $z$.
	
	It is easy to see that $\N$ can inherite a $3$-coloring, denoted by $\phi$, from the above coloring of $(0,\infty)$.
	
	Take $n$ from $\N$ arbitrarily and fix it.  Assume that there exists a strictly increasing sequence $\{k_i\}_{i\ge 1}\subset \N$ such that $\phi$ takes constant value on the set $$\{n+ak_1,n+ak_2,\ldots\}\cup \{n+bk_1,n+bk_2,\ldots\}.$$
	
	Take sufficiently large $k$ from $\{k_i:i\ge 1\}$such that $k>l$ and for any $m$ with $l^{m+1}>ak$, we have $(\frac{b}{a}-y)l^m>(\frac{b}{a}-1)n$, $(\frac{b}{a}y-x)l^m>(\frac{b}{a}-1)n$ and $(\frac{b}{a}x-l)l^m>(\frac{b}{a}-1)n$. Then:
	\begin{itemize}
		\item If $\phi(n+ak)=1$, then there exists $m\in \N$ such that $n+ak\in [l^m,y\cdot l^m)$ and $$n+bk\in [\frac{b}{a}l^{m}+n-\frac{bn}{a},\frac{b}{a}yl^{m}+n-\frac{bn}{a}).$$
		Note that $$\frac{b}{a}yl^{m}+n-\frac{bn}{a}<\frac{b}{a}yl^{m}<l^{m+1}\ \text{and}$$
		$$\frac{b}{a}l^{m}+n-\frac{bn}{a}=yl^{m}+(\frac{b}{a}-y)l^{m}+n-\frac{bn}{a}>yl^{m}.$$ So, $\phi(n+bk)\neq 1$. It is a contradiction.
		\item If $\phi(n+ak)=2$, then there exists $m\in \N$ such that $n+ak\in [yl^m,x\cdot l^m)$ and $$n+bk\in [\frac{b}{a}yl^{m}+n-\frac{bn}{a},\frac{b}{a}xl^{m}+n-\frac{bn}{a}).$$ Note that $$\frac{b}{a}xl^{m}+n-\frac{bn}{a}<\frac{b}{a}xl^{m}<yl^{m+1}\ \text{and}$$
		$$\frac{b}{a}yl^{m}+n-\frac{bn}{a}=xl^{m}+(\frac{b}{a}y-x)l^{m}+n-\frac{bn}{a}>xl^{m}.$$ So, $\phi(n+bk)\neq 2$. It is a contradiction.
		\item If $\phi(n+ak)=3$, then there exists $m\in \N$ such that $n+ak\in [xl^m, l^{m+1})$ and $$n+bk\in [\frac{b}{a}xl^{m}+n-\frac{bn}{a},\frac{b}{a}l^{m+1}+n-\frac{bn}{a}).$$ Note that $$\frac{b}{a}l^{m+1}+n-\frac{bn}{a}<\frac{b}{a}l^{m+1}<xl^{m+1}\ \text{and}$$ $$\frac{b}{a}xl^{m}+n-\frac{bn}{a}=l^{m+1}+(\frac{b}{a}x-l)l^{m}+n-\frac{bn}{a}>l^{m+1}.$$ So, $\phi(n+bk)\neq 3$. It is a contradiction.
	\end{itemize}
	To sum up, for any $n\in \N,i\in \{1,2,3\}$, the set $$\{m\in \N:\phi(n+am)=\phi(n+bm)=i\}$$ is empty or finite. This finishes the proof.
 \end{proof}
\subsection{Proof of (3) of Proposition \ref{P1}}
\begin{proof}
	Let $P(n)=an$ and $Q(n)=bn$, where $a,b\in \N$ and $a<b$. Now, we define a $2$-coloring of $(0,\infty)$ by the following:
	
	For any $z\in (0,\infty)$, if there exists $m\in \N$ such that $z\in [(\frac{b}{a})^{2m},(\frac{b}{a})^{2m+1})$, then we assign color $1$ to $z$; if If there exists $m\in \N$ such that $z\in [(\frac{b}{a})^{2m+1},(\frac{b}{a})^{2(m+1)})$, then we assign color $2$ to $z$; otherwise, we assign color $1$ to $z$.
	
	It is easy to see that $\N$ can inherite a $2$-coloring, denoted by $\phi$, from the above coloring of $(0,\infty)$.
	
	Assume that there exist infinite $B,C\subset \N$ such that $\phi$ takes constant value on the set $(B+aC)\cup (B+bC)$.
	
	We take $n_1,n_2$ from $B$ such that $an_2>bn_1$ and take $k$ from $C$ such that $$(\frac{b}{a})^{m}\le ak<(\frac{b}{a})^{m+1},$$ for some $m>2$ with $n_1+n_2<(\frac{b}{a})^{m-1}(\frac{b}{a}-1)$. As $\phi$ takes constant value on the set $(B+aC)\cup (B+bC)$, $$\{n_1+ak,n_1+bk,n_2+ak,n_2+bk\}\subset [(\frac{b}{a})^{m+1},(\frac{b}{a})^{m+2}).$$
	So, $$\frac{(\frac{b}{a})^{m+1}-n_1}{a}\le k<\frac{(\frac{b}{a})^{m+2}-n_1}{b}\ \text{and}$$ $$\frac{(\frac{b}{a})^{m+1}-n_2}{a}\le k<\frac{(\frac{b}{a})^{m+2}-n_2}{b}.$$ By the assumption that $an_2>bn_1$, we have that $\frac{(\frac{b}{a})^{m+1}-n_1}{a}>\frac{(\frac{b}{a})^{m+2}-n_2}{b}$. It is a contradiction. This finishes the proof.
\end{proof}
\subsection{Proof of Proposition \ref{P2}}
\begin{proof}
	Without loss of generality. we can assume that $a<b<c$. Let $y=\max\{\frac{c}{b},\frac{b}{a}\}$. Take $x$ from  $(y,\frac{c}{a})$ arbitrarily. And we take $l$ from $(xy,x\cdot\frac{c}{a})$ arbitrarily. Then $1<y<x<l$.
	
	Now, we define a $2$-coloring of $(0,\infty)$ by the following:
	
	For any $z\in (0,\infty)$, if there exists $m\in \N$ such that $z\in [l^{m},x\cdot l^{m})$, then we assign color $1$ to $z$; if If there exists $m\in \N$ such that $z\in [x\cdot l^{m},l^{m+1})$, then we assign color $2$ to $z$; otherwise, we assign color $1$ to $z$.
	
	It is easy to see that $\N$ can inherite a $2$-coloring, denoted by $\phi$, from the above coloring of $(0,\infty)$.
	
		Take $n$ from $\N$ arbitrarily and fix it. Assume that there exists a strictly increasing sequence $\{k_i\}_{i\ge 1}\subset \N$ such that $\phi$ takes constant value on the set $$\{n+ak_1,n+ak_2,\ldots\}\cup \{n+bk_1,n+bk_2,\ldots\}\cup \{n+ck_1,n+ck_2,\ldots\}.$$
		
		Take sufficiently large $k$ from $\{k_i:i\ge 1\}$ such that $k>l$ and for any natural numbebr $m$ with $l^{m+1}>k$, we have that $(\frac{c}{a}-x)l^m>(\frac{c}{a}-1)n$ and  $(\frac{c}{a}x-l)l^m>(\frac{c}{a}-1)n$. Then:
		\begin{itemize}
			\item If $\phi(n+ak)=\phi(n+bk)=1$, then there exists $m\in \N$ such that $$n+ak,n+bk\in [l^{m},x\cdot l^{m}).$$
		   	Then we have that $$\frac{c}{a   }(l^{m}-n)+n\le n+ck<\frac{c}{b}(x\cdot l^{m}-n)+n.$$ Note that $$\frac{c}{b}(x\cdot l^{m}-n)+n<\frac{c}{b}x\cdot l^{m}<l^{m+1}\ \text{and}$$ $$\frac{c}{a   }(l^{m}-n)+n=xl^{m}+(\frac{c}{a}-x)l^{m}+n-\frac{cn}{a}>xl^{m}.$$ Then $\phi(n+ck)=2$. It is a contradiction.
			\item If $\phi(n+ak)=\phi(n+bk)=2$, then there exists $m\in \N$ such that $$n+ak,n+bk\in [x\cdot l^{m},l^{m+1}).$$ A simple calculation gives $$\frac{c}{a}(x\cdot l^{m}-n)+n\le n+ck<\frac{c}{b}(l^{m+1}-n)+n.$$
			Note that  $$\frac{c}{b}(l^{m+1}-n)+n<\frac{c}{b}l^{m+1}<x\cdot l^{m+1}\ \text{and}$$ $$\frac{c}{a}(x\cdot l^{m}-n)+n=l^{m+1}+(\frac{c}{a}x-l)l^{m}+n-\frac{cn}{a}>l^{m+1}.$$ Then $\phi(n+ck)=1$. It is a contradiction.
		\end{itemize}
		To sum up, for any $i\in \{1,2\}$ and any $n\in \N$, the set $$\{m\in \N:\phi(n+am)=\phi(n+bm)=\phi(n+cm)=i\}$$ is empty or finite. This finishes the proof.
\end{proof}

\bibliographystyle{plain}
\normalem
\bibliography{ref}

\end{document}